\newtheorem{thm}{Theorem}[section]
\newtheorem{definition}[thm]{Definition}
\newtheorem{rem}[thm]{Remark}
\newtheorem{lem}[thm]{Lemma}
\newtheorem{defn}[thm]{Definition}
\newcommand{\supp}{\mathrm{supp}}
\newcommand{\XX}{\mathcal{M}}
\newcommand{\Cc}{\mathcal{C}}
\newcommand{\Ac}{\mathcal{A}}
\newcommand{\Sc}{\mathcal{S}}
\newcommand{\Fc}{\mathcal{F}}
\newcommand{\Mon}{\mathcal{M}}
\newcommand{\Span}[1]{\<{#1}\>}
\def\<{\langle}
\def\>{\rangle}
\def\NN{\mathbb{N}}
\def\ZZ{\mathbb{Z}}
\def\kk{\mathbb{K}}
\def\Mon{\mathcal{M}}
\def\com{\mathcal{C}}
\def\kx{\mathbb{K}[x_1^{\pm},\ldots,x_n^{\pm}]}
\def\Fc{\mathcal{F}}
\def\Xb{\mathbf{X}}  
\def\xb{\mathbf{x}} 
\def\<{\langle}
\def\>{\rangle}
\newcommand{\rank}{\mathrm{rank}\,}
\newcommand{\vspan}[1]{\langle{#1}\rangle}
\newcommand{\setdeg}[2]{{#1}_{[{#2}]}}
\newcommand{\spandeg}[2]{\langle{#1}_{[{#2}]}\rangle}
\title{Toric Border Bases}
\author[B. Mourrain \& Ph. Tr\'ebuchet]{
{Bernard Mourrain}\\
{Inria, \'Equipe GALAAD}\\
{BP 93, 06902 Sophia Antipolis, France}\\
{Bernard.Mourrain@inria.fr} \\
\ \\
{Philippe Tr\'ebuchet}\\
{Sorbonne Universit\'es, UPMC, Equipe APR. LIP6, UMR 7606,}\\
{Inria, \'Equipe OURAGAN}\\
{4 place Jussieu}
{75256 Paris Cedex}
{Philippe.Trebuchet@lip6.fr}
}
\begin{document}
\maketitle

\begin{abstract}
  We extend the theory and the algorithms of Border Bases to systems
  of Laurent polynomial equations, defining ``toric'' roots. Instead
  of introducing new variables and new relations to saturate by the
  variable inverses, we propose a more efficient approach which works
  directly with the variables and their inverse. We show that the
  commutation relations and the inversion relations characterize toric
  border bases. We explicitly describe the first syzygy module
  associated to a toric border basis in terms of these
  relations. Finally, a new border basis algorithm for Laurent
  polynomials is described and a proof of its termination is given for
  zero-dimensional toric ideals.
\end{abstract}

% \begin{keywords}
%  Laurent polynomials; solver; toric roots; border basis; syzygy;
%  normal form algorithm;
% \end{keywords}
 
\section{Introduction}\label{sec:1}

Polynomial equations appear naturally in many applications, as a way
to describe constraints between the unknown variables of a problem.
These could be, for instance, geometric constraints between objects (as in robotics, or
CAGD) or physical constraints (as in chemistry). 
In such problems, partial additional information may also be known: the
unknown variables are not zero, or real, positive, between 0 and 1, etc.

Finding the solutions by exploiting these constraints and 
this additional information is thus an
important operation, which usually requires dedicated and efficient methods. 

An algebraic approach to get (all) the complex solutions of a
polynomial system is based on the computation of quotient algebra
structures \cite{CLO97,EM07}. 
These structures are described effectively by a set of polynomials 
which represent the normal forms in the quotient structure and 
a method to compute the normal form of any polynomial.
This family of methods includes, for instance, Gr\"obner basis
\cite{CLO92, Eis94} or border basis computation \cite{BMnf99,BMPhT05,KK06}. 
A ``fixed-point'' strategy is involved in these algorithms: starting
with the initial set of equations, so-called $S$-polynomials or commutation
polynomials are computed and reduced. If non-zero remainders are
found, the set of equations is updated and the computation is
iterated; otherwise, the process is stopped.

An important difference between Gr\"obner bases and border Bases
is that a monomial ordering compatible with monomial
multiplication is necessary in the first type of methods. This
monomial ordering is used to define the initial ideal associated to
the ideal of the equations. The border basis approach extends Gr\"obner
basis methods by removing the monomial ordering constraint, which
may induce numerical instability when the coefficients of the
polynomials are known approximately \cite{BMnf99, BMPhT05,KK05,KR05,KK06,
  BM07, 
  MOURRAIN:2008:INRIA-00343103:1,
  Ka11, BMPhT12}.

In this paper, we consider systems of polynomial equations defining
(complex) solutions with non-zero coordinates. In this case, 
variables can be inverted and a natural setting for normal form
computation is the ring of the Laurent polynomials. 
The extension of Gr\"obner basis algorithm to Laurent polynomials is
difficult, due to the lack of monomial well-order and the fact that 
 monomial ideals are trivial. A classical way to handle this difficulty is to
introduce new variables $y_{i}$ for the inverse of the initial
variables $x_{i}$ and new relations $x_{i} y_{i}-1=0$ and to compute
with polynomials in this extended set of variables. Doubling the
number of variables and adding new relations usually 
significantly reduce the performance of algorithms whose complexity is 
at least exponential in the number of variables (even doubly
exponential in the worst case). 

In the context of elimination and resultant theory, the 
approach of Macaulay \cite{Mac02} for the construction of projective resultant
matrices has been extended successfully to toric resultant for Laurent
polynomials \cite{GKZ94,CAEm93,Emiris:1994:MBP:190347.190374,St:Elim,CaEm00,Dandrea02}.
By analyzing the support of the Laurent polynomials, resultant matrices 
of smaller size than for the projective resultant can be constructed.
This leads to more efficient algorithms to compute 
a monomial basis of the quotient algebra and the (toric) roots for a
square polynomial system,
provided that it is {\em generic} for its support.

Our motivation is to develop normal form algorithms that can be
performed with Laurent polynomials in the same type of complexity
bounds than for usual polynomials, using efficient sparse linear
algebra on Laurent polynomial spaces.

\noindent{}\textbf{Contributions.} 
We extend the border basis approach to Laurent polynomials and show
that a characterization similar to the criterion for classical border
bases \cite{BMnf99,BMPhT05,MOURRAIN:2008:INRIA-00343103:1} applies in
this case: namely, the commutation relations (the product of two
variables should commute) and the inversion relations (the product of
a variable by its inverse should be $1$) can be used to check if a set
of polynomials is a toric border basis in a given degree.  As a new
result, this characterization yields an explicit description of the
first module of syzygies of a toric border basis, generalizing in a
natural way results from \cite{MOURRAIN:2008:INRIA-00343103:1}.  It is
an extension of Schreyer Theorem which describes generators of the
first syzygy module of a Grobner basis in terms of the reduction of
$S$-polynomials \cite{Eis94}[Theorem 15.10]. We also deduce a new
algorithm for computing a toric border basis, which requires light
modifications of the classical border basis algorithm. We prove its
termination in the case of zero-dimensional toric ideals.

\noindent{}\textbf{Content.} The paper is organized as follows. 
Section 2 describes the normal form criterion for toric border bases.
In Section 3, an explicit family of generators of the first syzygy module 
of a toric border basis is given.
In Section 4, we detail the algorithm for computing a border
basis for Laurent polynomials and prove its correctness for
zero-dimensional ideals, before the concluding Section 5. 

\noindent{}\textbf{Notation.}\label{sec:2}
Let $\Mon$ be the set of Laurent monomials in the variables $x_{1},\ldots,
x_{n}$. 
 An element of $\Mon$ is of the form
$\xb^{\alpha}=x_{1}^{\alpha_{1}}\cdots x_{n}^{\alpha_{n}}$ with
$\alpha= (\alpha_{1}, \ldots, \alpha_{n}) \in \ZZ^{n}$.
\begin{definition} \label{def:deg}
The degree of a monomial $\xb^{\alpha}=x_{1}^{\alpha_{1}}\cdots$ $
x_{n}^{\alpha_{n}}\in \Mon$  is $\delta (\xb^{\alpha})= |\alpha_{1}|+ \cdots +
|\alpha_{n}|$, which we also denote $\delta (\alpha)$.
\end{definition}
We will use the following notation: $x_{-i}=x_{i}^{-1}$, $i=1\ldots
n$, $x_{0}=1$, $[-n,n]^{*}=\{i \in [-n,n] \mid i\neq 0\}$.
We say that a sequence $(i_{1}, \ldots, i_{k}), i_{j}\in [-n,n]^{*}$ is canonical if
$|i_{1}| \leq |i_{2}| \leq \cdots \leq |i_{k}|$ and 
$\delta (x_{i_{1}}\cdots x_{i_{k}})=k$. It corresponds to a canonical
way to write a monomial as a product of variables.

For $m\in \Mon$, we denote by $((m))$ the cone generated by $m$, that
is, $((m))=\{ m'\in \Mon \mid m'= m\,m''\ s.t.\ m''\in \Mon, 
\delta (m')=\delta (m)+\delta(m'') \}$. It corresponds to the set of monomial
multiples of $m$, which are in the same ``quadrant'' of $\ZZ^{n}$.

Let $S=\kx$ be the ring of Laurent polynomials in the variables $x_{1},\ldots,
x_{n}$ with coefficients in a field $\kk$, that is the set of finite
linear combinations of monomials in $\Mon$.

For $p=\sum_{\alpha\in A} p_{\alpha}\, \xb^{\alpha} \in S$ with $p_{\alpha}\neq 0$, $A$ is the
support of $p$ and $\delta (p) = \max_{\alpha \in A} \delta (\alpha)$.

%For any set $S$,  $\#S$ is its number of elements of $S$.
 
For $F\subset S$, 
let $\vspan{F}$ be the $\kk$-vector space spanned by $F$.

For $d \in \NN$ and $F\subset S$, let $F_{\le d}$  (resp. $F_{d}$) be
the set of polynomials $p\in F$ such that $\delta (p)\le d$
(resp. $\delta (p)= d$).

For $d\in \NN_{+}=\NN \setminus \{0\}$, let $\setdeg{F}{\le d}=\{ m\,f \mid m\in \Mon, f\in F,
\delta (m\,f)\le d \}$ and $\setdeg{F}{d}= \setdeg{F}{\le d}
\setminus \setdeg{F}{\le d-1}$.

For $B\subset \Mon$, we denote $B^{\times}= B\cup x_{1}B\cup \cdots \cup
x_{n}B\cup x_{1}^{-1}B\cup \cdots \cup x_{n}^{-1}B$ and call it the prolongation
of $B$. Let $\partial B = B^{\times}\setminus B$ be the border of $B$. 
Let $B^{[0]}=B$ and for $k\in \NN_{+}$, let $B^{[k]}= (B^{[k-1]})^{\times}$.

A set $B\subset \Mon$ is {\em connected to $1$} if $1\in B$ and
$\forall m \in B\setminus \{1\}$, there exists $i\in [-n,n]^{*}$ and
$m'\in B$, such that $m=x_{i}\, m'$ and $\delta (m')< \delta (m)$.

\section{Normal form criterion}\label{sec:3}

In this section, we describe normal form criteria for toric border
bases. The purpose of these criteria is to determine when we have a
decomposition:
$$ 
S_{\le d} = \Span{B}_{\le d} \oplus (F)_{\le d}
$$
for a monomial set $B$ connected to $1$, a polynomial set $F$ and a
degree $d\in \NN$.
The conditions that we describe extend naturally those known for classical
border bases, which are related to the commutation property of multiplication operators. 

Hereafter, $B\subset \Mon$ is a finite set of monomials connected
to $1$, $V=\Span {B}$ and $V^{\times}= \Span{B^{\times}}$.

Let $\pi:
V^{\times}_{\le d} \rightarrow V_{\le d}$ be a projection such that
$\pi \circ \pi = \pi$ and $\pi_{|V_{\le d}}$ is the identity map, and which is
compatible with the degree $\delta$: $\forall b \in \vspan{B^{+}}_{\le d}$,
$\delta (\pi (b)) \le \delta (b)$.

Let $K=\ker \pi$ be the kernel of $\pi$, so that
$$ 
V^{\times} = V \oplus K.
$$
As each monomial of $(\partial B)_{\leq d}$ can be projected by $\pi$
in $V_{\leq d}$, we can define a rewriting family as follows: 
\begin{definition}
For $B\subset \Mon$ connected to $1$ and a projection $\pi:
\Span{B^{\times}}_{\le d} \rightarrow \Span {B}_{\le d}$, the {\em rewriting family} for $\pi$ is the
set $F$ of polynomials of $\ker \pi$ of the form
\begin{equation}\label{eq:rewriting}
f_{\alpha} = \xb^{\alpha} - b_{\alpha}, 
\end{equation}
with $b_{\alpha}=\pi (\xb^{\alpha}) \in \Span{B}_{\le d}$, $\xb^{\alpha} \in (\partial B)_{\le d}$.
\end{definition}
We check that $F$ is a generating set of $K=\ker \pi$. Conversely,
a set $F$ of polynomials of the form \eqref{eq:rewriting} with 
$b_{\alpha} \in \Span{B}_{\le d}$, 
$\xb^{\alpha} \in (\partial B)_{\le d}$ and 
$\Span{B^{\times}}_{\le  d}= \Span{F} \oplus \Span{B}$
defines a projection from $\Span{B^{\times}}_{\le  d}$ 
onto $\Span {B}_{\le d}$.

\begin{definition}
For $F\subset S$ and $B\subset \Mon$, 
let $\com_{B} (F)$ be the set of polynomials in
$\vspan{B^{\times}}$  which are of the form
\begin{enumerate}
 \item $x_{i} f$ for some $f\in F$, $i\in [-n,n]^{*}$ or 
 \item $x_{i} f - x_{j} f'$ for $f,f'\in F$, $-1\le i<j \le n$.
\end{enumerate}
 The set of polynomials of type $1$ (resp. $2$) is denoted $\com_{B}^{1} (F)$
 (resp. $\com_{B}^{2} (F)$). 
The polynomials in $\com_{B}^{1} (F)$ (resp. $\com_{B}^{2} (F)$) 
are called the prolongation (resp. commutation) polynomials of $F$ for $B$.
Let $\com_{B} (F)= \com_{B}^{1} (F)\cup \com_{B}^{2} (F)$.
\end{definition}
From this definition, we see that $\com (F)\subset \vspan{F^{\times}} \cap \vspan{B^{\times}}$.
Hereafter, the set $B$ will be fixed and we will
simply write $\com_{B} (F)= \com (F)$. 

We define the operator of multiplication by $x_{i}$ associated
to $\pi$ as:
\begin{eqnarray*}
X_{i}: \vspan{B}_{\le d-1}& \rightarrow &  \vspan{B}_{\le d}\\
 b & \mapsto & \pi (x_{i} b).
\end{eqnarray*}
As $\pi$ is compatible with the degree, the image by $X_{i}$ of 
an element of degree $\le k <d$ is of degree $\le k+1$.

For a monomial $\xb^{\alpha}=x_{1}^{\alpha_{1}}\cdots x_{n}^{\alpha_{n}} \in
\Mon$ of degree $\le d$, we define
$\Xb^{\alpha} := X_{1}^{\alpha_{1}}\circ \cdots \circ
X_{n}^{\alpha_{n}}$.
It is an operator from $\vspan{B}_{\le d-\delta (\alpha)}$ to $\vspan{B}_{\le d}$.
We extend this construction by linearity and for any $p \in S_{\le d}$, we define
$p (\Xb) : \vspan{B}_{\le d-\delta (p)} \rightarrow \vspan{B}_{\le d}.$

As $B$ contains $1$ which is of degree $0$, we can then define 
\begin{eqnarray*}
\sigma : S_{\le d}& \rightarrow &  \vspan{B}_{\le d}\\
 p & \mapsto & p (\Xb) (1).
\end{eqnarray*}
Its kernel is denoted $I_{\pi, d}$.

% We consider an increasing sequence of monomial sets:
% $$ 
% B_{0}=\{1\} \subset B_{1} \subset \cdots \subset B_{l}=B
% $$
% such that $B_{i+1} \subset B_{i}^{\times}$. Let $V_{i}=\Span{B_{i}},
% i=0\ldots l$. We easily check that $\forall b\in B_{i}, \delta (b)\leq i$. 

Our objective is to relate properties of commutation and inversion of
the operators $X_{i}$ with the property that $\sigma$ defines a normal
form, that is a projection on $\Span{B}_{\le d}$ along the ideal
generated by $F$ in degree $\le d$.
We also relate it with a property which is easy to test
algorithmically, namely the polynomials $\Cc_{B}(F)$
reduce to $0$ by the rewriting family $F$.

The techniques used here are very similar to those developed in
\cite{BMnf99, BMPhT05,MOURRAIN:2008:INRIA-00343103:1,BMPhT12}, but
they require specific adaptations to the toric case, which we need to
detail.

\begin{thm}\label{THM:BB:DEG:D}
Let $d\geq 2$, let $B$ be a subset of $\Mon$ connected to $1$, 
let $\pi :\vspan{B^{\times}}_{\le d} \rightarrow \vspan{B}_{\le d}$ be a
projection and let ${F}$ be the rewriting family for $\pi$.
The following conditions are equivalent:
\begin{enumerate}
 \item $(X_{i}\circ X_{-i})_{| \vspan{B}_{\le d-2}}=Id$ for $1 \leq i
   \leq n$, 

$(X_{i}\circ X_{j} -X_{j} \circ X_{i})_{| \vspan{B}_{\le d-2}}=0$ 
for $1\le i<j\le n$,
 \item the map $\sigma$ is a projection which defines the exact sequence 
$$
0\rightarrow  \spandeg{F}{\le d}
   \rightarrow \Sc_{\le d} \stackrel{\sigma}{\longrightarrow}\Span{B}_{\le d} \rightarrow 0
$$
\item $\forall r \in \com_{B} (F_{\le d-1})$, $\pi ( r)=0$.
\end{enumerate}
\end{thm}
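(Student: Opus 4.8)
The plan is to prove the cyclic chain $(1)\Rightarrow(2)\Rightarrow(3)\Rightarrow(1)$. Two of the links are essentially bookkeeping around the identity ``$\sigma=\pi$ where both make sense''; the real content is in $(1)\Rightarrow(2)$, and the one genuinely toric-specific difficulty hides there, in the inclusion $\spandeg{F}{\le d}\subseteq\ker\sigma$.

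\textbf{The two short links.} For $(2)\Rightarrow(3)$, I would first note that under $(2)$ the projections $\sigma$ and $\pi$ agree on $\vspan{B^{\times}}_{\le d}$: writing $v=\pi(v)+k$ with $k=v-\pi(v)\in\ker\pi$ and $\delta(k)\le d$, the fact that $F$ generates $\ker\pi$ together with $\supp(k)\subseteq B^{\times}_{\le d}$ forces $k$ to be a $\kk$-combination of the $f_{\alpha}$ with $\xb^{\alpha}\in(\partial B)_{\le d}$, hence $k\in\spandeg{F}{\le d}=\ker\sigma$ and $\sigma(k)=0$; also $\sigma(\pi(v))=\pi(v)$ since $\sigma$ is a projection onto $\vspan{B}_{\le d}$. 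Then every $r\in\com_{B}(F_{\le d-1})$ lies in $\vspan{B^{\times}}_{\le d}$ (it is $x_{i}f$ or $x_{i}f-x_{j}f'$ with $\delta(f),\delta(f')\le d-1$) and in $\spandeg{F}{\le d}=\ker\sigma$, so $\pi(r)=\sigma(r)=0$. For $(3)\Rightarrow(1)$, fix $b\in B$ with $\delta(b)\le d-2$ (linearity reduces to this). For $i\in[-n,n]^{*}$ the monomial $x_{i}b\in B^{\times}$ has degree $\le d-1$, so $x_{i}b-\pi(x_{i}b)$ is either $0$ or some $f\in F_{\le d-1}$. Substituting $x_{j}\pi(x_{-j}b)=b-x_{j}f$ (with $f=x_{-j}b-\pi(x_{-j}b)$) into $X_{j}X_{-j}(b)=\pi(x_{j}\pi(x_{-j}b))$ shows $x_{j}f=b-x_{j}\pi(x_{-j}b)\in\vspan{B^{\times}}$, hence $x_{j}f\in\com_{B}^{1}(F_{\le d-1})$ and $\pi(x_{j}f)=0$ by $(3)$, giving $X_{j}X_{-j}(b)=b$. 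Similarly, with $f=x_{j}b-\pi(x_{j}b)$, $f'=x_{i}b-\pi(x_{i}b)$,
\[
(X_{i}X_{j}-X_{j}X_{i})(b)=\pi\bigl(x_{i}\pi(x_{j}b)-x_{j}\pi(x_{i}b)\bigr)=\pi(x_{j}f'-x_{i}f),
\]
and $x_{j}f'-x_{i}f=x_{i}\pi(x_{j}b)-x_{j}\pi(x_{i}b)\in\vspan{B^{\times}}$ is, up to sign, a commutation polynomial in $\com_{B}^{2}(F_{\le d-1})$, or a prolongation polynomial, or zero; in every case $(3)$ kills its $\pi$-image, so $(X_{i}X_{j}-X_{j}X_{i})(b)=0$.

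\textbf{The main implication $(1)\Rightarrow(2)$, part one.} The engine is a ``commuting action'' lemma: under $(1)$, $\sigma(x_{i}p)=X_{i}(\sigma(p))$ for all $p\in\Sc_{\le d-1}$ and $i\in[-n,n]^{*}$. To prove it, reduce to $p=\xb^{\alpha}$ a monomial; then $\sigma(x_{i}\xb^{\alpha})$ and $X_{i}(\sigma(\xb^{\alpha}))$ are obtained by applying the same multiset of elementary operators $X_{j}$, $j\in[-n,n]^{*}$, to $1$ in two different orders, and one passes between the two orders by the elementary moves $X_{i}X_{j}\leftrightarrow X_{j}X_{i}$ ($1\le i<j\le n$) and $X_{i}X_{-i}\to\mathrm{id}$; the key point is that each move is applied to an intermediate element obtained from $1$ by at most $\delta(x_{i}\xb^{\alpha})-2\le d-2$ elementary operators, hence of degree $\le d-2$, exactly the range where $(1)$ is available. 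From the lemma, using that $B$ is connected to $1$ and inducting on $\delta(b)$, one gets $\sigma(b)=\pi(b)=b$ for all $b\in B$, so $\sigma$ restricts to the identity on $\vspan{B}_{\le d}$; and for $f_{\alpha}=\xb^{\alpha}-b_{\alpha}\in F$, writing $\xb^{\alpha}=x_{j}b''$ with $b''\in B$ gives $\sigma(\xb^{\alpha})=X_{j}(\sigma(b''))=X_{j}(b'')=\pi(\xb^{\alpha})=b_{\alpha}$, whence $\sigma(f_{\alpha})=0$ and $F\subseteq\ker\sigma$.

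\textbf{Part two and the main obstacle.} The delicate step is upgrading this to $\spandeg{F}{\le d}\subseteq\ker\sigma$. For $g=mf$ one peels off the first variable of the canonical form of $m$, $m=x_{i}m'$, and wants $\sigma(g)=\sigma(x_{i}(m'f))=X_{i}(\sigma(m'f))=X_{i}(0)=0$ by induction; the obstruction, and the point where the Laurent case genuinely departs from the classical one, is that multiplication by $x_{i}$ may \emph{lower} the degree, so $m'f$ can still have degree $d$ and the naive induction on $\delta(m)$ does not close. One handles this by carefully tracking the degrees of the intermediate reductions (equivalently, by running the whole argument as an induction on $d$, noting that the restriction of $\pi$ to $\vspan{B^{\times}}_{\le d-1}$ inherits $(1)$ in degree $d-1$, and checking directly the base case $d=2$, where $\vspan{B}_{\le 0}=\vspan{1}$ and $(1)$ is a finite set of identities). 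Once $\spandeg{F}{\le d}\subseteq\ker\sigma$ is known, the proof finishes by a dimension count: unconditionally $\Sc_{\le d}=\vspan{B}_{\le d}+\spandeg{F}{\le d}$ (every monomial of degree $\le d$ reduces modulo $\spandeg{F}{\le d}$ into $\vspan{B}_{\le d}$, by induction on its degree using $1\in B$), and since $\Sc_{\le d}$ is finite dimensional, $\sigma$ is onto $\vspan{B}_{\le d}$ and $\vspan{B}_{\le d}\cap\ker\sigma=0$, so $\Sc_{\le d}=\vspan{B}_{\le d}\oplus\ker\sigma$; comparing with $\Sc_{\le d}=\vspan{B}_{\le d}\oplus\spandeg{F}{\le d}$ (a direct sum because $\spandeg{F}{\le d}\subseteq\ker\sigma$ meets $\vspan{B}_{\le d}$ only in $0$) forces $\spandeg{F}{\le d}=\ker\sigma$, which is precisely the exact sequence of $(2)$.
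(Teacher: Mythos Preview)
Your cyclic scheme $(1)\Rightarrow(2)\Rightarrow(3)\Rightarrow(1)$ and your treatments of $(2)\Rightarrow(3)$ and $(3)\Rightarrow(1)$ are essentially those of the paper. The organisation of $(1)\Rightarrow(2)$ differs somewhat. The paper does not isolate $\spandeg{F}{\le d}\subseteq\ker\sigma$ as the hard half and then close by a dimension count. Instead, after stating the multiplicativity $\sigma(m\,m')=m(\Xb)(\sigma(m'))$ for monomials with $\delta(m),\delta(m'),\delta(m\,m')\le d$ and deducing $\sigma=\pi$ on $\vspan{B^{\times}}_{\le d}$, it obtains $F\subset\ker\sigma$ and passes to $\spandeg{F}{\le d}\subset\ker\sigma$ in one line via that same multiplicativity (applied with $m'$ running over the monomials of an $f\in F$, so that $\sigma(m f)=m(\Xb)(\sigma(f))=0$). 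The inclusion it proves carefully is the \emph{other} one, $\ker\sigma\subset\spandeg{F}{\le d}$: for every monomial $m$ of degree $\le d$ one shows $m-\sigma(m)\in\spandeg{F}{\le d}$ by induction on $\delta(m)$, via
\[
m-\sigma(m)=x_{i}\bigl(m'-\sigma(m')\bigr)+\bigl(x_{i}\sigma(m')-\pi(x_{i}\sigma(m'))\bigr),\qquad \delta(m')<\delta(m).
\]
This is exactly the reduction you invoke for $\Sc_{\le d}=\vspan{B}_{\le d}+\spandeg{F}{\le d}$, so your dimension-count endgame is a repackaging of the same computation rather than a genuinely different route.

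The real gap in your write-up is precisely the spot you flag. Saying the obstruction is ``handled by carefully tracking the degrees of the intermediate reductions'' or by ``running the whole argument as an induction on $d$'' is not a proof: you neither carry out the tracking nor state and verify the inductive step and the base case $d=2$. The paper's way around your peeling-off-a-variable difficulty is to apply $m(\Xb)$ to $\sigma(f)=0$ \emph{in one shot} rather than one $X_{i}$ at a time, relying on the multiplicativity in the stronger form quoted above (three degree bounds, not just $\delta(p)\le d-1$). If you want to keep your ``commuting action lemma'' in the weaker form $\sigma(x_{i}p)=X_{i}(\sigma(p))$ for $p\in\Sc_{\le d-1}$, you must actually produce an argument that closes the induction when $\delta(m'f)=d$; otherwise, adopt the paper's one-shot multiplicativity and the one-line deduction that goes with it.
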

\begin{proof}
$1) \Rightarrow 2):$ 
As the operators $X_{i}$ are commuting and $X_{-i}=X_{i}^{-1}$ in
degree $d-2$, for any monomials $m,m'$ such that $\delta (m)\le d, \delta (m')\le d,
\delta (m\,m')\le d$, we have $m (\Xb) \circ m' (\Xb)= m' (\Xb) \circ m (\Xb)$
and $\sigma (m\,m')= m (\Xb)( \sigma (m'))$.
The construction of $\sigma$ is independent of the order in which we compose the operators $X_{i}$
since they are commuting.

Let us show by induction on $\delta (m)$, that for all monomials $m\in
B^{\times}$, we have $\sigma (m) = \pi (m)$.

The only monomial $m\in \Mon$ such that $\delta (m)=0$ is $1\in B$ and 
by definition $\sigma (1)=\pi (1) = 1$. The property is true for the degree $0$.

Assume that it is true in degree $0 \le k-1< d$ and let $m\in B$ with
$\delta (m)=k$.
%\color{red}
As $B^{\times}$ is connected to $1$, there exists $i\in [-n,n]^{*}$ and $m'\in
B$ with $\delta (m') \le k-1$ such that $m=x_{i}\, m'$. 
%\color{black}
By induction, we have $\sigma (m')=\pi (m')=m'$, thus 
$$
\sigma (m) = m (\Xb) (1) = X_{i} (\sigma (m')) = X_{i} (m') = \pi (x_{i} \,m') = \pi (m).
$$

This shows, in particular, that $\forall m\in B, \sigma (m)=m$
and that $\sigma \circ \sigma = \sigma$.
We deduce that the image of $\sigma$
is $\Span{B}_{\le d}$ and that the kernel $I_{\pi,d}$ of $\sigma$ is generated 
by $p-\sigma (p)$ for $p\in S_{\le d}$.

We now prove that $\spandeg{F}{\le d} \subset I_{\pi,d}$. 
For any $m\in \partial B$, we have $\sigma (m)= \pi (m)$ and $\sigma (\pi
(m))= \pi (m)$ since $\pi (m)\in \Span{B}_{\le d}$.
This implies that $\sigma (m-\pi (m))=0$. 
We have shown that the elements $m-\pi (m)$,
$m\in \partial B$ are in $I_{\pi,d}$. We deduce that $F\subset
I_{\pi, d}$ and thus that $\spandeg{F}{\le d}\subset I_{\pi,d}$.

In the next step, we prove that $I_{\pi,d} \subset \spandeg{F}{\le d}$.
As $I_{\pi,d}$ is spanned by $p-\sigma (p)$ for $p\in S_{\le d}$, it
is sufficient to prove that for a monomial $m$ with $\delta
(m)\le d$, we have $m-\sigma (m)\in \spandeg{F}{\le  d}$, which we do by
induction on $\delta (m)$. The case $\delta (m)=0$ or $m=1$ is obvious.
For any monomial $m\in \Mon_{\le d}$, 
we can decompose it as $m=x_{i} m'$ with $i\in [-n,n]^{*}$ and $\delta
(m')< \delta (m)$. By the induction hypothesis, $m'-\sigma (m') \in
\spandeg{F}{\le  d-1}$. We deduce that
$$ 
m-\sigma (m) = x_{i} (m'-\sigma (m')) + x_{i} \sigma (m') - \pi (x_{i} \sigma (m'))
$$
is in $\spandeg{F}{\le  d}$, since $x_{i} \sigma (m') - \pi (x_{i} \sigma
(m')) \in \ker \pi= \Span{F}$.
This proves that $I_{\pi,d} \subset \spandeg{F}{\le d}$. 

This implies that $I_{\pi,d}= \spandeg{F}{\le d}$ and proves point (2).

\noindent{}$2) \Rightarrow 3):$
Let $r \in \com (F_{\le d-1})$ then $r \in  \vspan{( F_{\le
    d-1})^{\times}}\cap \vspan{B^{\times}}$.
As $\vspan{( F_{\le d-1})^{\times}}\subset \spandeg{F}{\le d} = \ker \sigma$
we have $\sigma (r)=0$. But $\sigma$ coincides with $\pi$ on
$\vspan{B^{\times}}$ so that we have $\pi (r)=0$, which shows that $r \in 
\ker \pi = \vspan{F}$.

\noindent{}$3) \Rightarrow 1):$
Let $m\in B$ of degree $\le d-2$ and $1 \le i < j \le n$. Suppose that
$m_{1} := x_{i} m \in \partial B$ and $m_{2} := x_{j} m \in \partial B$.
Let $f_{1} = m_{1} -\pi (m_{1}), f_{2}= m_{2} -\pi (m_{2}) \in F_{\le d-1}$.
As $x_{i} m_{2} = x_{j} m_{1} = x_{i} x_{j} m$, we have
\begin{eqnarray*}
{(X_{i} \circ X_{j} - X_{j} \circ X_{i}) (m)} 
 & = & \pi ( x_{i} \pi (m_{2})) - \pi ( x_{j} \pi (m_{1})) \\
 & = & \pi ( x_{i} (m_{2} - f_{2}) -  x_{j} (m_{1} -f_{1})) \\
 & = & \pi ( x_{j} f_{1} - x_{i} f_{2}).
\end{eqnarray*}
As $x_{j} f_{1} - x_{i} f_{2} =  x_{i}
\pi (m_{2}) -  x_{j} \pi (m_{1})\in \com_{B}(F_{\le d-1})$,
the hypothesis (3) implies that $\pi ( x_{j} f_{1} - x_{i} f_{2})=0$.
A similar argument applies if $x_{i} m \in B$ or $x_{j} m \in B$. 
Consequently, we have 
$(X_{i}\circ X_{j} -X_{j} \circ X_{i})_{| \vspan{B}_{\le d-2}}=0$.

Similarly, we have 
\begin{eqnarray*}
{X_{-i} \circ X_{i} (m)} 
 & = & \pi ( x_{-i} \pi (m_{1})) =  \pi ( x_{-i} (m_{1} -f_{1})) \\
 & = & \pi (m - x_{-i} f_{1})).
\end{eqnarray*}
As $m -x_{-i} f_{1}\in \Span{B^{\times}}$, we have $x_{-i} f_{i} \in \Span{B^{\times}}$
and thus $x_{-i} f_{1} \in \com_{B}(F_{\le d-1})$, which implies that  $\pi ( x_{-i} f_{1})=0$.
We deduce that $\pi ( m - x_{-i} f_{1})= \pi (m)= m$. A similar
argument applies if $x_{i} m \in B$. This proves that $(X_{-i}\circ
X_{i})_{| \vspan{B}_{\le d-2}}=Id$ and concludes the proof of point (3).
\end{proof}
If one of these (equivalent) conditions is satisfied, we say that $F$ is a {\em border basis} in degree $d$ for $B$.

\begin{rem}\label{rem:1}
If $F$ is a border basis for $B$ in degree $d$, then Theorem \ref{THM:BB:DEG:D}
implies that $F_{\le d'}$ is a border basis for $B$ in degree $d'$ for
any $2\le d'\le d$.
\end{rem}

\begin{rem}\label{rem:2}
If Theorem \ref{THM:BB:DEG:D} (2) is satisfied, then 
any element $p\in S_{\le d}$ is the sum of $\sigma (p)\in
\Span{B}_{\le d}$ and $p-\sigma (p)\in I_{\pi,d}= \spandeg{F}{\le d}$.
Moreover, $p\in \Span{B}_{\le d} \cap \spandeg{F}{\le d}$ is such that $p=\sigma (p)=0$.
We deduce that 
$$ 
S_{\le d} = \Span{B}_{\le d} \oplus  \spandeg{F}{\le d},
$$
and $\sigma$ is the projection on $ \Span{B}_{\le d}$ along $ \spandeg{F}{\le d}$.
It is also called a normal form on $S_{\le d}$ modulo $\spandeg{F}{\le d}$.
\end{rem}

\section{Syzygies}

Let $F$ be a border basis in any degree for a finite set $B$ of monomials, which is
connected to $1$.

For any monomial $m\in \Mon$, we define $\delta_{B} (m)$ as the
smallest integer $d\in \NN$ such that $m\in B^{[d]}$. If $m\in B$,
then $\delta_{B} (m)=0$. For any $m_{1},m_{2}\in \Mon$, $\delta_{B}
(m_{1}\, m_{2})\leq \delta (m_{1}) \delta_{B} (m_{2})$.

For any $i \in [-n,n]^{*}$, let $\mu_{i}$ be the multiplication by
$x_{i}$ in $S$. We define the map
\begin{eqnarray*} 
\psi_{i}: \Span{B} & \rightarrow & \Span{F}\\
           m    & \mapsto     & (\mu_{i} -X_{i}) (m) = x_{i} m - \pi(x_{i} m)
\end{eqnarray*}
For a monomial $m \in B$,  if $x_{i} m\in B$ then $\psi_{i} (m)=0$,
otherwise $\psi_{i} (m)$ is an element of $F$. Conversely, for any
$f\in F$ of the form $f=\xb^{\alpha}-b_{\alpha}$ with
 $\xb^{\alpha} \in \partial B$ and $b_{\alpha}\in \Span{B}$, there
 exist $m \in B$ and $i\in [-n,n]^{*}$ such that $\xb^{\alpha}= x_{i}
 m$. We deduce that $f=\psi_{i} (m)$. Therefore, the set of elements $\psi_{i}
 (m)\neq 0$ with $m\in B$, $i\in [-n,n]^{*}$ is $F$.

Using the relations between elements in $F$ and elements of the form
$\psi_{i} (m)$, we are going now to associate
to the set $F$ a basis of a free $S$ module.
The purpose of this construction is to describe generators
of the syzygies between the elements of $F$ explicitly.
We denote by $Y_{i}, i\in [-n,n]^{*}$ the canonical basis of the
vector space $Y= \kk^{2n}$.
Let $\Sc_{1}$ be the free $S$-module generated by 
$ Y_{i} \otimes m$
with $i \in [-n,n]^{*}$, $m\in B$ and 
$x_{i} m \not \in B$.
The basis of the $S$ module $\Sc_{1}$ is also denoted 
$$ 
Y_{i} [ m] :=  Y_{i} \otimes m.
$$
By convention, $Y_{i}[m]=0$ if $x_{i}\, m\in B$ and for any $b=
\sum_{j} \lambda_{j} m_{j}\in
\Span{B}$, $Y_{i}[b]= Y_{i}\otimes b = \sum_{j} \lambda_{j} Y_{i}[m_{j}]$.
An element of $\Sc_{1}$ is a sum of terms of the form 
$\lambda \, m_{1} Y_{i}[m_{2}]$ with $\lambda \in \kk\setminus \{0\}$, $m_{1}\in \Mon$, $m_{2}\in B$.

We extend the degree $\delta$ to $\Sc_{1}$ as follows: for any term 
of the form $m_{1} Y_{i}[m_{2}]$ with $m_{1}\in \Mon$, $m_{2}\in B$,
we set $\delta (m_{1} Y_{i}[m_{2}])= \delta (m_{1})$. For all
$r \in \Sc_{1}$, $\delta (r)$ is the maximum degree of its non-zero terms.

We define now the map $\partial_{1}: \Sc_{1} \rightarrow S$ as 
\begin{eqnarray*} 
\partial_{1}: \Sc_{1} & \rightarrow & S\\\;
Y_{i}[ m] & \mapsto & \psi_{i} (m).
\end{eqnarray*}
The kernel of $\partial_{1}$ is the set of syzygies
between the elements $\psi_{i} (m)=f$ of $F$.

\begin{lem}
$\forall m=x_{i_{1}} \cdots x_{i_{k}} \in \Mon$, we have 
$
m = \pi (m) + \partial_{1} (\Psi_{i_{1}, \ldots  ,{i_{k}}}) 
$
where
\begin{equation}\label{eq:Psi}
\Psi_{i_{1}, \ldots  ,{i_{k}}} = \sum_{l=1}^{k} x_{i_{1}} \cdots
x_{i_{l-1}} Y_{i_{l}} [ X_{i_{l+1}} \circ \cdots \circ X_{i_{k}} (1)].
\end{equation}
\end{lem}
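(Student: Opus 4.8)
The plan is to argue by induction on the length $k$ of the chosen factorisation $m=x_{i_{1}}\cdots x_{i_{k}}$ (no canonicity is needed: the right-hand side of \eqref{eq:Psi} makes sense for an arbitrary word in the $x_{i}$). For $k=0$ we have $m=1\in B$, the sum in \eqref{eq:Psi} is empty so $\partial_{1}(\Psi)=0$, and the asserted identity is $1=\pi(1)$, which holds. For $k\ge 1$ I would set $m=x_{i_{1}}\,m'$ with $m'=x_{i_{2}}\cdots x_{i_{k}}$, so that the induction hypothesis reads $m'-\pi(m')=\partial_{1}(\Psi_{i_{2},\ldots,i_{k}})$.

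The first key point is a purely formal recursion for the module element $\Psi$: isolating the term $l=1$ of \eqref{eq:Psi} and reindexing the rest by $l\mapsto l-1$, every remaining summand acquires a common factor $x_{i_{1}}$ in front, and one gets
$$
\Psi_{i_{1},\ldots,i_{k}}=Y_{i_{1}}\bigl[\,X_{i_{2}}\circ\cdots\circ X_{i_{k}}(1)\,\bigr]+x_{i_{1}}\,\Psi_{i_{2},\ldots,i_{k}}.
$$
Since $F$ is a border basis in every degree, the operators $X_{i}$ commute pairwise and satisfy $X_{i}\circ X_{-i}=Id$ (condition (1) of Theorem \ref{THM:BB:DEG:D}); hence one may reorder and cancel the factors in the iterated operator, so that $X_{i_{2}}\circ\cdots\circ X_{i_{k}}(1)=m'(\Xb)(1)=\pi(m')$, the normal form of $m'$ (Remark \ref{rem:2}). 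Applying the $S$-linear map $\partial_{1}$, together with the identity $\partial_{1}(Y_{i}[b])=\psi_{i}(b)$ for every $b\in\Span{B}$ (from the $S$-linearity of $\partial_{1}$ and the conventions defining $Y_{i}[b]$), we obtain
$$
\partial_{1}(\Psi_{i_{1},\ldots,i_{k}})=\psi_{i_{1}}(\pi(m'))+x_{i_{1}}\,\partial_{1}(\Psi_{i_{2},\ldots,i_{k}}).
$$

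On the other hand, writing $m=x_{i_{1}}m'=x_{i_{1}}\pi(m')+x_{i_{1}}\bigl(m'-\pi(m')\bigr)$ and inserting the induction hypothesis into the second summand gives $m=x_{i_{1}}\pi(m')+x_{i_{1}}\,\partial_{1}(\Psi_{i_{2},\ldots,i_{k}})$. Now $x_{i_{1}}\pi(m')=X_{i_{1}}(\pi(m'))+\psi_{i_{1}}(\pi(m'))$ directly from the definition of $\psi_{i_{1}}=\mu_{i_{1}}-X_{i_{1}}$, and $X_{i_{1}}(\pi(m'))=\pi(x_{i_{1}}\pi(m'))=\pi(x_{i_{1}}m')=\pi(m)$, where the middle equality holds because $m'-\pi(m')$ lies in the ideal generated by $F$, hence so does $x_{i_{1}}(m'-\pi(m'))$, and $\pi$ vanishes on it. Substituting, $m=\pi(m)+\psi_{i_{1}}(\pi(m'))+x_{i_{1}}\,\partial_{1}(\Psi_{i_{2},\ldots,i_{k}})$, which by the previous display is precisely $\pi(m)+\partial_{1}(\Psi_{i_{1},\ldots,i_{k}})$; this closes the induction. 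The only steps that are not pure bookkeeping are the recursion for $\Psi$ with its index shift and the two identifications $X_{i_{2}}\circ\cdots\circ X_{i_{k}}(1)=\pi(m')$ and $\pi(x_{i_{1}}m')=X_{i_{1}}(\pi(m'))$, both resting on the hypothesis that $F$ is a border basis in every degree; I do not anticipate any genuine obstacle.
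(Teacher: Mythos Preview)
Your proof is correct and follows essentially the same route as the paper: induction on the length $k$ of the word, peeling off the leftmost variable $x_{i_{1}}$, and invoking Theorem~\ref{THM:BB:DEG:D} to identify $X_{i_{1}}\circ\cdots\circ X_{i_{k}}(1)$ with $\pi(m)$. The paper starts the induction at $k=1$ rather than $k=0$ and writes the computation out as a single chain of equalities rather than separating out the recursion $\Psi_{i_{1},\ldots,i_{k}}=Y_{i_{1}}[\pi(m')]+x_{i_{1}}\Psi_{i_{2},\ldots,i_{k}}$, but the content is the same.
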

\begin{proof} 
We prove the relation by induction on $k$. For $k=1$ and $i_{1} \in [-n,n]^{*}$,
$m=x_{i_{1}} = X_{i_{1}} (1) + \psi_{i_{1}} (1)$.
 
Assume that the property is true for $x_{i_{2}} \cdots x_{i_{k}} \in \Mon$.
Then, by induction hypothesis, 
\begin{eqnarray*}
\lefteqn{x_{i_{1}}\, x_{i_{2}} \cdots x_{i_{k}}= x_{i_{1}} (x_{i_{2}}
  \cdots x_{i_{k}}) } \\
& = & x_{i_{1}} (\pi(x_{i_{2}} \cdots x_{i_{k}}) + \partial_{1}(\Psi_{i_{2}, \ldots  ,{i_{k}}})) \\
&= &x_{i_{1}} X_{i_{2}}\circ \cdots \circ X_{i_{k}} (1) + x_{i_{1}} \sum_{l=2}^{k} x_{i_{2}} \cdots x_{i_{l-1}} \psi_{i_{l}} \circ 
X_{i_{l+1}} \circ \cdots \circ X_{i_{k}} (1)\\
& = &  X_{i_{1}} \circ X_{i_{2}}\circ \cdots \circ X_{i_{k}} (1)\\&&
+ \psi_{i_{1}}\circ  X_{i_{2}}\circ \cdots \circ X_{i_{k}} (1)
+ \sum_{l=2}^{k} x_{i_{1}} \cdots x_{i_{l-1}} \psi_{i_{l}} \circ X_{i_{l+1}} \circ \cdots \circ X_{i_{k}} (1). \\
& = &  X_{i_{1}} \circ X_{i_{2}}\circ \cdots \circ X_{i_{k}} (1)
+ \sum_{l=1}^{k} x_{i_{1}} \cdots x_{i_{l-1}} \psi_{i_{l}} \circ
X_{i_{l+1}} \circ \cdots \circ X_{i_{k}} (1).\\
& = &
\pi (x_{i_{1}}\cdots x_{i_{k}}) 
+
\partial_{1} (\sum_{l=1}^{k} x_{i_{1}} \cdots x_{i_{l-1}} Y_{i_{l}}[
X_{i_{l+1}} \circ \cdots \circ X_{i_{k}} (1)])
\end{eqnarray*}
since by Theorem \ref{THM:BB:DEG:D}, 
$\pi (x_{i_{1}}\cdots x_{i_{k}}) 
= \sigma (x_{i_{1}}\cdots x_{i_{k}}) 
= X_{i_{1}} \circ X_{i_{2}}\circ \cdots \circ X_{i_{k}} (1).
$ 
\end{proof}

This construction allows us to relate any term of $\Sc_{1}$ with an
element of the form $\Psi_{i_{1}, \ldots, i_{k}}$ as follows:
\begin{lem}\label{lem:term:Psi}
For any term $m_{1} Y_{i}[m_{2}]$ of $\Sc_{1}$ with $m_{1}\in \Mon$, $m_{2}\in B$,
there exists $i_{1}, \ldots, i_{l} \in [-n,n]^{*}$ such that 
$$
{\Psi_{i_{1}, \ldots, i_{k}}
=  m_{1} Y_{i}[ m_{2}]+r} \\
$$
with $\delta (r) < \delta (m_{1}) = \delta (m_{1} Y_{i}[m_{2}])$.
\end{lem}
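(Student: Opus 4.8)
The plan is to exploit the key fact that every basis element of $\Sc_1$ has the form $Y_i[m]$ with $m\in B$, hence $m=\pi(m)$, so $Y_i[m]$ itself already appears (up to lower-degree corrections) as one of the summands of a suitable $\Psi_{i_1,\dots,i_k}$. More precisely, given a term $m_1 Y_i[m_2]$ with $m_1\in\Mon$ of degree $k-1$ and $m_2\in B$, I would write $m_1$ canonically as $m_1 = x_{i_1}\cdots x_{i_{k-1}}$ (so $\delta(m_1)=k-1$), set $i_k := i$, and then consider $\Psi_{i_1,\dots,i_{k-1},i}$ applied with the ``tail'' being $m_2$. The natural candidate is to use the formula \eqref{eq:Psi} but truncated/anchored so that the innermost operator product $X_{i_{l+1}}\circ\cdots$ evaluated at $1$ reproduces $m_2$; this is where I must be slightly careful, since \eqref{eq:Psi} always ends at $X_{i_k}(1)$, not at a general $m_2\in B$.

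The cleanest route is the following. Since $m_2\in B$ and $B$ is connected to $1$, write $m_2 = x_{j_1}\cdots x_{j_s}$ canonically with $\delta(m_2)=s$ and each partial product $X_{j_{r}}\circ\cdots\circ X_{j_s}(1)$ lying in $B$ (i.e. $\pi$ acts as the identity all the way up, so $X_{j_r}\circ\cdots\circ X_{j_s}(1)=x_{j_r}\cdots x_{j_s}$). Now consider the sequence $(i_1,\dots,i_{k-1},i,j_1,\dots,j_s)$, of total length $k+s$, and look at $\Psi$ for this sequence. Its summand for index $l=k$ (the slot carrying $Y_i$) is exactly
$$
x_{i_1}\cdots x_{i_{k-1}}\, Y_i\big[\,X_{j_1}\circ\cdots\circ X_{j_s}(1)\,\big] = m_1\, Y_i[m_2],
$$
using that the inner expression equals $x_{j_1}\cdots x_{j_s}=m_2\in B$. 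All the summands with $l<k$ have the form $x_{i_1}\cdots x_{i_{l-1}} Y_{i_l}[\cdots]$ with $l-1\le k-2$, hence degree $\le k-2 < k-1$; and all summands with $l>k$ have a leading monomial $x_{i_1}\cdots x_{i_{k-1}}\,x_i\,x_{j_1}\cdots$ — wait, these have degree $\ge k$, which is too large. So I would instead \emph{not} append $j_1,\dots,j_s$ to the front but rather choose the sequence so that $Y_i$ sits in the \emph{last} relevant slot; concretely, reverse the roles: take the sequence whose tail reconstructs $m_2$ and whose $l$-slot with $Y_i$ is the one of maximal prefix degree, and drop (or separately bound) the genuinely higher-degree tail terms, which vanish because $X_{j_r}\circ\cdots(1)\in B$ forces $\psi_{j_r}(\cdots)=0$, i.e. $Y_{j_r}[\cdots]=0$ by the convention $Y_i[m]=0$ when $x_i m\in B$.

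So the actual argument is: choose $i_1,\dots,i_k$ with $k = \delta(m_1)+1$, namely $x_{i_1}\cdots x_{i_{k-1}}=m_1$ canonically and $i_k=i$, but evaluate $\Psi$ against the ``base point'' $m_2$ rather than $1$ — equivalently, run the induction of the previous Lemma starting from $m_2\in B$ instead of from $1$, which is legitimate since $m_2=\pi(m_2)$ and $\sigma(m_2)=m_2$. Then the $l=k$ summand is $x_{i_1}\cdots x_{i_{k-1}}Y_i[m_2]=m_1 Y_i[m_2]$, the summands $l<k$ have degree $\le k-2$, and there are no summands $l>k$. Hence $\Psi_{i_1,\dots,i_k} = m_1 Y_i[m_2] + r$ with $\delta(r)\le k-2 < k-1 = \delta(m_1 Y_i[m_2])$, as claimed. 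I expect the main obstacle to be purely bookkeeping: making precise the ``$\Psi$ anchored at $m_2$'' variant of \eqref{eq:Psi} and verifying it is well-defined (that every intermediate $X_{i_{l+1}}\circ\cdots\circ X_{i_k}(m_2)$ is the genuine monomial multiple, so that the degrees telescope and $Y_i[m_2]$ really carries full degree $\delta(m_1)$), together with the edge case $m_1=1$, where $k=1$ and $\Psi_{i} = Y_i[m_2]$ directly with $r=0$.
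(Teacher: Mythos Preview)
Your second paragraph already contains the correct argument, and it is exactly the paper's approach: take the sequence $(i_1,\ldots,i_{k_1},i,j_1,\ldots,j_{k_2})$ where $m_1=x_{i_1}\cdots x_{i_{k_1}}$ canonically (so $\delta(m_1)=k_1$) and $m_2=x_{j_1}\cdots x_{j_{k_2}}$ along a path in $B$, i.e.\ with every tail $x_{j_l}\cdots x_{j_{k_2}}\in B$. In the expansion \eqref{eq:Psi} for this sequence, the summand at slot $l=k_1+1$ is $m_1\,Y_i[m_2]$; the summands with $l\le k_1$ form $r$, each of degree at most $k_1-1$; and the summands with $l>k_1+1$ \emph{vanish}, since $Y_{j_l}[x_{j_{l+1}}\cdots x_{j_{k_2}}]=0$ by the convention $Y_j[m]=0$ when $x_j m\in B$. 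You identified this vanishing yourself; that is the whole proof, and it is precisely what the paper does.

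Your third paragraph (``So the actual argument is\ldots'') is an unnecessary detour. The lemma asks you to exhibit indices and use the \emph{standard} $\Psi_{i_1,\ldots,i_k}$ of \eqref{eq:Psi}, which is always anchored at $1$. A ``$\Psi$ anchored at $m_2$'' with only $\delta(m_1)+1$ indices is not of that form; to identify it with a genuine $\Psi_{i_1,\ldots,i_k}$ you would have to append the $j$-indices for $m_2$ and invoke the very vanishing argument from your second paragraph. So the anchored variant buys nothing, and taken on its own (with $k=\delta(m_1)+1$) it does not prove the statement as written. Drop the third paragraph and keep the second.
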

\begin{proof}
Let $m_{1}= x_{i_{1}} \cdots x_{i_{k_{1}}}$ with $\delta
(m_{1})=k_{1}$ and $m_{2} = x_{j_{1}}\cdots
x_{j_{k_{2}}}$ with $ x_{j_{l}}\cdots x_{j_{k_{2}}} \in B$ for $1\le
l\le k_{2}$. 
As $Y_{j_{l}}[ X_{j_{l+1}}\circ \cdots \circ X_{j_{k_{2}}} (1)]
=Y_{j_{l}}[ x_{j_{l+1}} \cdots x_{j_{k_{2}}}]=0$, the expansion
\eqref{eq:Psi} of $\Psi_{i_{1}, \ldots, i_{k_{1}}, i, j_{1} ,\ldots, j_{k_{2}}}$
yields
$${\Psi_{i_{1}, \ldots, i_{k_{1}}, i, j_{1} ,\ldots, j_{k_{2}}}
=  m_{1} Y_{i}[ m_{2}]+r} \\
$$
with 
$$
r=  \sum_{l=1}^{k_{1}} x_{i_{1}} \cdots
x_{i_{l-1}} Y_{i_{l}}[ X_{i_{l+1}} \circ \cdots \circ X_{i_{k_{1}}} \circ X_{i}
\circ m_{2} (X) (1)].
$$
The term $r$ is such that $\delta (r)\le k_{1}-1< \delta (m_{1})$,
which proves the lemma.
\end{proof}

\begin{lem}
$\forall i \neq j\in [-n,n]^{*}$ and $\forall m\in B$, the element
\begin{equation*}\label{eq:genK1:phi} 
\phi_{i,j} (m) := x_{i} Y_{j}[m] - x_{j} Y_{i} [m] - Y_{j}[X_{i}(m)] + Y_{i}[X_{j}(m)] 
\end{equation*}
is in $\ker \partial_{1}$. 
\end{lem}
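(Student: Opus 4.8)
The plan is to apply $\partial_{1}$ to $\phi_{i,j}(m)$ and check that the result is zero. Since $\partial_{1}$ is a morphism of $S$-modules and, by definition, $\partial_{1}(Y_{k}[b])=\psi_{k}(b)$ for $k\in[-n,n]^{*}$ and $b\in\Span{B}$ (extending the defining formula by linearity in $b$), applying $\partial_{1}$ term by term gives
$$
\partial_{1}(\phi_{i,j}(m)) = x_{i}\,\psi_{j}(m) - x_{j}\,\psi_{i}(m) - \psi_{j}(X_{i}(m)) + \psi_{i}(X_{j}(m)).
$$
First I would substitute $\psi_{k}(b) = x_{k}\,b - X_{k}(b)$ (recall $X_{k}(b)=\pi(x_{k}b)$) into the four summands and expand. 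The two copies of $x_{i}x_{j}m$ cancel; the term $-x_{i}X_{j}(m)$ coming from $x_{i}\psi_{j}(m)$ cancels the $+x_{i}X_{j}(m)$ coming from $\psi_{i}(X_{j}(m))$; and $+x_{j}X_{i}(m)$ from $-x_{j}\psi_{i}(m)$ cancels $-x_{j}X_{i}(m)$ from $-\psi_{j}(X_{i}(m))$. What remains is exactly the commutator
$$
\partial_{1}(\phi_{i,j}(m)) = X_{j}(X_{i}(m)) - X_{i}(X_{j}(m)) = (X_{j}\circ X_{i} - X_{i}\circ X_{j})(m).
$$

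It then remains to see that this commutator vanishes on $\Span{B}$ for every pair $i\neq j$ in $[-n,n]^{*}$. For $1\le i<j\le n$ this is precisely the commutation relation of Theorem~\ref{THM:BB:DEG:D}(1), which holds here because $F$ is a border basis in every degree. For $j=-i$ the expression is $(X_{-i}\circ X_{i}-X_{i}\circ X_{-i})(m)$, and Theorem~\ref{THM:BB:DEG:D}(1) gives $X_{i}\circ X_{-i}=\mathrm{Id}$ on $\Span{B}$; since $B$ is finite this one-sided inverse is two-sided, so $X_{-i}\circ X_{i}=\mathrm{Id}$ as well and the commutator is zero. In particular each $X_{k}$, $k\in[-n,n]^{*}$, is invertible on $\Span{B}$ with inverse $X_{-k}$, so the multiplicative group generated by $X_{1},\ldots,X_{n}$ is abelian, being generated by pairwise-commuting invertible operators; hence all the $X_{k}$ pairwise commute and the commutator is zero in the remaining mixed cases $|i|\neq|j|$ as well. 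Therefore $\partial_{1}(\phi_{i,j}(m))=0$, i.e.\ $\phi_{i,j}(m)\in\ker\partial_{1}$.

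The computation itself is routine bookkeeping; the step that deserves a little care is the last one, namely checking that the commutation and inversion relations of Theorem~\ref{THM:BB:DEG:D}(1), stated only for positive indices, propagate to all signed indices — this is where finiteness of $B$ and the border basis property in every degree enter. Alternatively, one can bypass the expansion: by the lemma expressing $x_{i_{1}}\cdots x_{i_{k}}=\pi(x_{i_{1}}\cdots x_{i_{k}})+\partial_{1}(\Psi_{i_{1},\ldots,i_{k}})$, for a writing $m=x_{j_{1}}\cdots x_{j_{k}}$ with all suffixes in $B$ (which exists since $B$ is connected to $1$) one checks that $\phi_{i,j}(m)=\Psi_{i,j,j_{1},\ldots,j_{k}}-\Psi_{j,i,j_{1},\ldots,j_{k}}$; both terms have the same image $x_{i}x_{j}m-\pi(x_{i}x_{j}m)$ under $\partial_{1}$, so their difference lies in $\ker\partial_{1}$.
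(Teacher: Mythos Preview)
Your proof is correct and follows essentially the same route as the paper: apply $\partial_{1}$, expand each $\psi_{k}$ as $\mu_{k}-X_{k}$, cancel, and reduce to the commutator $X_{j}\circ X_{i}-X_{i}\circ X_{j}$ acting on $m$. The paper then simply invokes ``since $X_{i}$ and $X_{j}$ commute'', whereas you go further and justify why commutation holds for \emph{all} signed indices $i\neq j\in[-n,n]^{*}$ from the conditions of Theorem~\ref{THM:BB:DEG:D}(1), which are stated only for positive indices; your use of finiteness of $B$ to promote the one-sided inverse $X_{i}\circ X_{-i}=\mathrm{Id}$ to a two-sided one is the clean way to close that gap. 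Your alternative argument via $\Psi_{i,j,j_{1},\ldots,j_{k}}-\Psi_{j,i,j_{1},\ldots,j_{k}}$ is also correct and is a genuinely different (and slightly slicker) route not taken in the paper.
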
 
\begin{proof}
By definition of $\psi_{i}$, we have 
\begin{eqnarray*}
\lefteqn{\partial_{1} (x_{i} Y_{j}[m] - x_{j} Y_{i} [m] - Y_{j}[X_{i}(m)] + Y_{i}[X_{j}(m)])} \\
 & = & x_{i} \psi_{j} (m) -x_{j} \psi_{i} (m) - \psi_{j} (X_{i} (m)) +
 \psi_{i} (X_{j} (m))\\
 & = & x_{i} (x_{j} m - X_{j} (m) ) - x_{j} (x_{i} m - X_{i} (m))\\
 &   & - (x_{j} X_{i} (m) - X_{j} (X_{i} (m))) 
+ (x_{i} X_{j} (m) - X_{i} (X_{j} (m)))\\
 & = &   X_{j} (X_{i} (m)) - X_{i} (X_{j} (m)) = 0
\end{eqnarray*}
since $X_{i}$ and $X_{j}$ commute.
\end{proof}
By linearity, we extend the map $\phi_{i,j}$ to the vector space
$\Span {B}$ spanned by $B$, so that $\phi_{i,j} (\sum_{k} \lambda_{k}
m_{k})= \sum_{k} \lambda_{k} \phi_{i,j} (m_{k})$.

\begin{lem}
$\forall i \in [-n,n]^{*}$ and $\forall m\in B$, the element
\begin{equation*}\label{eq:genK1:rho}
\rho_{i} (m) := x_{i} Y_{-i}[m] + Y_{i}[X_{-i}(m)] 
\end{equation*}
is in $\ker \partial_{1}$.
\end{lem}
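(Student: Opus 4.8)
The plan is to mimic the computation carried out in the proof of the preceding lemma on $\phi_{i,j}(m)$, replacing the use of the commutation relation $X_{i}\circ X_{j}=X_{j}\circ X_{i}$ by the inversion relation $X_{i}\circ X_{-i}=Id$. First I would apply $\partial_{1}$ to $\rho_{i}(m)$, using its $S$-linearity and the defining rule $\partial_{1}(Y_{j}[m'])=\psi_{j}(m')$ (together with the linear extension of $\psi_{j}$ and of $Y_{j}[\cdot]$ to $\vspan{B}$, which makes $Y_{i}[X_{-i}(m)]$ and $\psi_{i}(X_{-i}(m))$ meaningful since $X_{-i}(m)\in\vspan{B}$). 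This gives
$$
\partial_{1}(\rho_{i}(m)) = x_{i}\,\psi_{-i}(m) + \psi_{i}(X_{-i}(m)).
$$

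Next I would expand both summands via $\psi_{j}(b)=x_{j}b-X_{j}(b)$, namely $\psi_{-i}(m)=x_{-i}m-X_{-i}(m)$ and $\psi_{i}(X_{-i}(m))=x_{i}X_{-i}(m)-X_{i}(X_{-i}(m))$. Substituting and using $x_{i}x_{-i}=1$, the two occurrences of $x_{i}X_{-i}(m)$ cancel and the expression collapses to
$$
\partial_{1}(\rho_{i}(m)) = x_{i}x_{-i}\,m - X_{i}(X_{-i}(m)) = m - X_{i}(X_{-i}(m)).
$$
To conclude, I would invoke the hypothesis that $F$ is a border basis in every degree: by Theorem \ref{THM:BB:DEG:D}, condition (1), the operators satisfy $(X_{i}\circ X_{-i})_{|\vspan{B}_{\le d-2}}=Id$ for all relevant $d$, hence $X_{i}(X_{-i}(m))=m$ for $m\in B$, and therefore $\partial_{1}(\rho_{i}(m))=0$, i.e. $\rho_{i}(m)\in\ker\partial_{1}$.

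I do not expect any real obstacle here: the statement is a one-line verification once $\partial_{1}$ is applied and the definitions of $\psi_{i}$ and of $X_{i}$ are unfolded. The only point that deserves a word of care is that the inversion identity $X_{i}\circ X_{-i}=Id$ is being applied on all of $\vspan{B}$ rather than in a single bounded degree; this is exactly what the blanket assumption ``$F$ is a border basis in any degree'' is there to guarantee, via Remark \ref{rem:1} applied in arbitrarily large degree.
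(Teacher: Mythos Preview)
Your proposal is correct and follows essentially the same computation as the paper's proof: apply $\partial_{1}$, expand each $\psi_{j}$ as $x_{j}-X_{j}$, cancel the cross terms using $x_{i}x_{-i}=1$, and conclude via $X_{i}\circ X_{-i}=Id$. The paper's argument is terser but line-for-line identical; your extra remark about the inversion identity holding in all degrees because of the standing ``border basis in any degree'' hypothesis is a reasonable clarification.
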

\begin{proof}
\begin{eqnarray*}
\lefteqn{\partial_{1} (x_{i} Y_{-i}[m] + Y_{i}[X_{-i}(m)] )}\\
 & = & x_{i} \,\psi_{-i} (m) + \psi_{i} (X_{-i} (m)) \\
 & = & x_{i}\, ( x_{-i}\, m - X_{-i} (m) ) + (x_{i}\, X_{-i} (m) -X_{i}
 (X_{-i} (m)))\\
 & = & m - X_{i} \circ X_{-i} (m) =0
\end{eqnarray*}
since $X_{i}\circ X_{-i} = Id$.
\end{proof}
 
Let $K_{1}\subset \Sc_{1}$ be the $S$-module generated by the elements 
$\rho_{i} (m)$, $\phi_{i,j} (m)$ for $i \neq j\in [-n,n]^{*}$ and $m\in B$.

We are going now to describe how a term $m_{1} Y_{i}[m_{2}]$ with $m_{1}\in \Mon$,
$m_{2}\in B$ can be transformed modulo $K_{1}$. 

\begin{lem}\label{lem:equiv}
$\forall m=x_{i_{1}} \cdots x_{i_{k}} = 
x_{j_{1}} \cdots x_{j_{k'}} \in \Mon$,
$$ 
\Psi_{i_{1}, \ldots, i_{k}} -\Psi_{j_{1}, \ldots, j_{k'}}\in K_{1}.
$$
\end{lem}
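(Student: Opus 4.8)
The statement asserts that any two "canonical factorizations" of the same monomial $m$ give rise to $\Psi$-expressions that agree modulo $K_1$. The natural approach is to reduce the general case to a minimal move. Recall from the theory of monomials that if $m = x_{i_1}\cdots x_{i_k} = x_{j_1}\cdots x_{j_{k'}}$ as elements of $\Mon$, then the two words are connected by a finite sequence of elementary transformations: (a) transposing two adjacent letters $x_a x_b \leftrightarrow x_b x_a$ with $a \neq b$, and (b) deleting or inserting an adjacent cancelling pair $x_a x_{-a}$. (Note that this forces $k \equiv k' \pmod 2$ and the two canonical lengths may differ, which is why the lemma is stated with $k$ and $k'$ possibly distinct.) So the plan is: first, reduce to the case where the two words differ by a single such move; second, handle each of the two move types by a direct computation showing $\Psi_{\ldots} - \Psi_{\ldots'} \in K_1$; the commutation move will produce an element that is, modulo lower-degree corrections, a multiple of some $\phi_{i,j}(m')$, and the cancellation move will similarly produce a multiple of some $\rho_i(m')$.

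For the commutation move, suppose the two words are $w_1 = (i_1,\dots,i_{l-1}, a, b, i_{l+2}, \dots, i_k)$ and $w_2 = (i_1,\dots,i_{l-1}, b, a, i_{l+2},\dots, i_k)$ with $a\neq b$. Expanding $\Psi$ via \eqref{eq:Psi}, the terms with summation index $< l$ and $> l+1$ are literally identical in the two expansions (using $X_a \circ X_b = X_b \circ X_a$ from Theorem \ref{THM:BB:DEG:D} for the $< l$ terms, whose bracket argument is $X_{i_{l+2}}\circ\cdots$ post-composed with $X_a X_b$ vs $X_b X_a$, which agree). The difference therefore comes only from the two middle terms ($l$ and $l+1$), and one computes
$$
\Psi_{w_1} - \Psi_{w_2} = x_{i_1}\cdots x_{i_{l-1}}\bigl(x_a Y_b[m'] - x_b Y_a[m'] - Y_b[X_a(m')] + Y_a[X_b(m')]\bigr)
$$
where $m' = X_{i_{l+2}}\circ\cdots\circ X_{i_k}(1) \in \Span{B}$; by definition this inner bracket is exactly $\phi_{a,b}(m')$ (extended by linearity to $\Span{B}$), hence the difference lies in $K_1$. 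For the cancellation move, if $w_1 = (i_1,\dots,i_{l-1}, a, -a, i_{l+2},\dots)$ and $w_2 = (i_1,\dots,i_{l-1}, i_{l+2},\dots)$, a similar term-by-term comparison, using $X_a\circ X_{-a} = Id$ to match the surviving terms, isolates the discrepancy as $x_{i_1}\cdots x_{i_{l-1}}\rho_a(m')$ with $m' = X_{i_{l+2}}\circ\cdots\circ X_{i_k}(1)$, again in $K_1$.

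To assemble these into the full result, I would argue by induction on the number $N$ of elementary moves needed to pass from the first word to the second. The base case $N=0$ is trivial. For the inductive step, pick an intermediate word $w$ one move away from $x_{i_1}\cdots x_{i_k}$; then $\Psi_{i_1,\dots,i_k} - \Psi_{w} \in K_1$ by the single-move computation above, and $\Psi_{w} - \Psi_{j_1,\dots,j_{k'}} \in K_1$ by the induction hypothesis, so the sum lies in $K_1$. The main obstacle — and the only genuinely delicate point — is the bookkeeping in the single-move computations: one must verify carefully that the flanking $\Psi$-terms on both sides of the moved block really do coincide, which relies on the operator identities $X_iX_j = X_jX_i$ and $X_iX_{-i} = Id$ holding on the relevant degree range; since we are assuming $F$ is a border basis in every degree, these identities are available unconditionally (Theorem \ref{THM:BB:DEG:D}), so no degree tracking is needed and the argument goes through cleanly.
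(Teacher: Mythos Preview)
Your proposal is correct and follows essentially the same approach as the paper: reduce to elementary moves (adjacent transposition and insertion/deletion of a cancelling pair $x_a x_{-a}$), and for each move compute the difference of the two $\Psi$'s directly, identifying it as a monomial multiple of $\phi_{a,b}(m')$ or $\rho_a(m')$ respectively. The paper's proof is organized the same way, with the minor cosmetic difference that it phrases the reduction as transforming each word to a canonical sequence rather than as an induction on the number of moves; your term-by-term verification of the flanking summands (using $X_iX_j=X_jX_i$ and $X_iX_{-i}=Id$) is exactly what the paper does implicitly.
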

\begin{proof}
By successive permutations of two adjacent indices and contraction of
adjacent indices $-i, i$, we
can transform any sequence $J=({j_{1}},\ldots, {j_{k'}})$
into a canonical sequence $I=(i_{1}, \ldots, i_{k})$.
Thus it is enough to prove the property for the permutation of
two consecutive indices: $J= (i_{1}, \ldots, i_{l}, i_{l+1},
\ldots, i_{k})$ and for the contraction of two indices 
$J=  (i_{1}, \ldots, i_{l}$, $-j, j, i_{l+1},
\ldots, i_{k})$.
By definition of $\Psi$ and since the operators $X_{i}$ are commuting, we have
\begin{eqnarray*} 
\lefteqn{\Psi_{\ldots, i_{l}, i_{l+1}, \ldots}
- 
\Psi_{\ldots, i_{l+1}, i_{l}, \ldots}} \\
&= & x_{i_{1}} \cdots x_{i_{l-1}} \left(x_{i_{l}} Y_{i_{l+1}} [ 
X_{i_{l+2}} \circ X_{i_{l+3}} \cdots \circ X_{i_{k}} (1)]\right.
 - x_{i_{l+1}} Y_{i_{l}} [ X_{i_{l+2}} \circ X_{i_{l+3}} \cdots \circ X_{i_{k}} (1)]\\
&& - Y_{i_{l}}[  X_{i_{l+1}} \circ X_{i_{l+2}} \circ X_{i_{l+3}}
\cdots \circ X_{i_{k}} (1)]
\left. + Y_{i_{l+1}} [ X_{i_{l}} \circ X_{i_{l+2}} \circ X_{i_{l+3}} \cdots \circ X_{i_{k}} (1)]\right)\\
& = &  x_{i_{1}} \cdots x_{i_{l-1}} \phi_{i,j} (X_{i_{l+2}} \circ X_{i_{l+3}} \cdots \circ X_{i_{k}} (1)) 
\end{eqnarray*} 
which is an element of $K_{1}$.
Similarly, for $j\in [-n,n]^{*}$ 
\begin{eqnarray*}
\lefteqn{\Psi_{\ldots, i_{l},j,-j, i_{l+1}, \ldots}
- \Psi_{\ldots, i_{l}, i_{l+1}, \ldots}} \\
&= & x_{i_{1}} \cdots x_{i_{l}} 
\left( Y_{j} [ X_{-j} \circ X_{i_{l+1}} \circ X_{i_{l+3}} \cdots \circ X_{i_{k}} (1)]\right.\\
& & + \left. x_{j} Y_{-j} [ X_{i_{l+1}} \circ X_{i_{l+3}} \cdots \circ X_{i_{k}} (1)]\right)\\
& = &  x_{i_{1}} \cdots x_{i_{l-1}} \rho_{j} (X_{i_{l+1}} \circ X_{i_{l+3}} \cdots \circ X_{i_{k}} (1))
\end{eqnarray*}
which is also an element of $K_{1}$.
\end{proof} 

\begin{thm}
The first module of syzygies of $F$ is generated by the elements 
\begin{itemize}
 \item $\rho_{i} (m)= x_{i} Y_{-i}[m] + Y_{i}[X_{-i}(m)]$, 
 \item $\phi_{i,j} (m)= x_{i} Y_{j}[m] - x_{j} Y_{i} [m] + Y_{i}[X_{j}(m)] - Y_{j}[X_{i}(m)]$ 
\end{itemize}
for $i \neq j\in [-n,n]^{*}$ and $m\in B$.
\end{thm}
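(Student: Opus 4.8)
The plan is to show that $K_1 = \ker \partial_1$, where $K_1$ is the submodule generated by the $\rho_i(m)$ and $\phi_{i,j}(m)$. The inclusion $K_1 \subseteq \ker\partial_1$ is already established by the two lemmas computing $\partial_1(\rho_i(m)) = 0$ and $\partial_1(\phi_{i,j}(m)) = 0$, so the work lies entirely in the reverse inclusion $\ker\partial_1 \subseteq K_1$.

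First I would take an arbitrary syzygy $z \in \ker\partial_1 \subseteq \Sc_1$ and argue by induction on $\delta(z)$, the maximum degree of its non-zero terms. The key tool is Lemma~\ref{lem:term:Psi}: each top-degree term $m_1 Y_i[m_2]$ of $z$ (with $\delta(m_1) = \delta(z)$) can be written as $\Psi_{i_1,\ldots,i_k} - r$ where $r$ has strictly smaller degree and $\partial_1(\Psi_{i_1,\ldots,i_k}) = x_{i_1}\cdots x_{i_k} - \pi(x_{i_1}\cdots x_{i_k})$ by the first lemma of this section. Replacing, in $z$, each top-degree term by the corresponding $\Psi$-expression changes $z$ only by a lower-degree element and does not leave $\ker\partial_1$ modulo adjusting by the images. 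Next I would use Lemma~\ref{lem:equiv}: if two terms $m_1 Y_i[m_2]$ and $m_1' Y_{i'}[m_2']$ of $z$ have the property that the underlying monomials $x_{i_1}\cdots x_{i_k}$ and $x_{i_1'}\cdots x_{i_{k'}'}$ obtained from Lemma~\ref{lem:term:Psi} are \emph{equal} in $\Mon$, then $\Psi_{i_1,\ldots,i_k} - \Psi_{i_1',\ldots,i_{k'}'} \in K_1$, so modulo $K_1$ those contributions can be merged.

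The heart of the argument is then the following reduction. Writing the top-degree part of $z$ as $\sum_\mu c_\mu\, (\text{term representing monomial } \mu)$ after the $\Psi$-substitution, the condition $\partial_1(z) = 0$ forces the degree-$\delta(z)$ part of $\sum_\mu c_\mu(\mu - \pi(\mu))$ to vanish; but the monomials $\mu$ of degree exactly $\delta(z)$ that are not in $B$ are linearly independent modulo $\Span{B}$ (this is where the border-basis property from Theorem~\ref{THM:BB:DEG:D}, equivalently $S = \Span{B}\oplus (F)$, is used), so in fact after grouping equal monomials via Lemma~\ref{lem:equiv} the top-degree part of $z$ is congruent modulo $K_1$ to an element of strictly smaller degree. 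Iterating, $z$ is reduced modulo $K_1$ to something of degree $0$; a degree-$0$ syzygy is a $\kk$-linear relation among the $\psi_i(1)$, which are (nonzero) elements of $F$, hence a genuine syzygy living in $\Sc_1$ in degree $0$, and one checks directly these are already in $K_1$ (or are zero because the corresponding $\psi_i(1)$ are part of the free generating set of relations). Hence $z \in K_1$.

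The main obstacle I expect is the bookkeeping in the inductive step: one must carefully track that substituting $\Psi$-expressions for terms, and merging via Lemma~\ref{lem:equiv}, genuinely lowers $\delta(z)$ rather than merely rearranging terms of the same degree, and that the linear-independence argument modulo $\Span{B}$ is applied at the correct graded piece (degree exactly $d$, using $S_{\le d} = \Span{B}_{\le d} \oplus \spandeg{F}{\le d}$ from Remark~\ref{rem:2}). A secondary subtlety is handling terms $m_1 Y_i[m_2]$ where the canonicalization in Lemma~\ref{lem:term:Psi} involves contractions $-j,j$: one needs Lemma~\ref{lem:equiv}'s $\rho_j$ case to see these do not obstruct the merging. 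Once the degree-lowering is shown to be well-defined modulo $K_1$, the conclusion is immediate by induction together with the trivial base case in degree $0$.
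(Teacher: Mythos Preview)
Your overall strategy matches the paper's: both reduce an arbitrary $s\in\ker\partial_{1}$ modulo $K_{1}$ using Lemmas~\ref{lem:term:Psi} and~\ref{lem:equiv}, then argue that what remains must be zero. The paper, however, does not induct on $\delta(z)$; it reduces every term to a \emph{canonical form} $m'_{1}Y_{i'}[m'_{2}]$ characterized by $\delta_{B}(m'_{1}x_{i'}m'_{2})=\delta(m'_{1})$ (with $(i',m'_{1})$ lex-minimal), and then observes that the monomial $m'_{1}x_{i'}m'_{2}$ of maximal $\delta_{B}$ cannot be cancelled because the canonical representation of a monomial is unique.

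The gap in your sketch is precisely at the cancellation step. You write that ``the monomials $\mu$ of degree exactly $\delta(z)$ that are not in $B$ are linearly independent modulo $\Span{B}$,'' but the underlying monomial $\mu=m_{1}x_{i}m_{2}$ of a top-degree term has $S$-degree $\delta(m_{1}x_{i}m_{2})$, which need \emph{not} equal $\delta(z)=\delta(m_{1})$ (there can be cancellation between $m_{1}$ and $x_{i}m_{2}$, and $m_{2}\in B$ can have arbitrary degree). Moreover, $\partial_{1}(w)$ from the lower-degree part can produce monomials of arbitrarily large $S$-degree, so there is no clean graded piece in $S$ where your independence argument applies. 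Consequently the claim ``after grouping by $\mu$ the top-degree part becomes congruent modulo $K_{1}$ to something of strictly smaller degree'' is not justified as stated.

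What makes the paper's argument go through is the replacement of $\delta$ by the $B$-index $\delta_{B}$: in canonical form one has $\delta(m'_{1})=\delta_{B}(\mu)$, and $\partial_{1}(m'_{1}Y_{i'}[m'_{2}])=\mu+p$ with $\delta_{B}(p)<\delta_{B}(\mu)$, so the monomials of maximal $\delta_{B}$ appearing in $\partial_{1}(s)$ are exactly the $\mu$'s of the canonical terms, each occurring once. Your induction can be repaired by using $\delta_{B}$ of the underlying monomial rather than $\delta$ in $\Sc_{1}$ as the inductive quantity, at which point it collapses into the paper's canonical-form argument. The base case you sketch (degree~$0$ relations among the $\psi_{i}(m)$) is also more delicate than ``one checks directly'': these are the relations $Y_{i}[m]-Y_{i'}[m']$ with $x_{i}m=x_{i'}m'$, and showing each lies in $K_{1}$ already requires the connectedness of $B$ and an instance of the $\phi$- or $\rho$-relations.
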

\begin{proof} 
Let $s\in \ker \partial_{1}$ be a sum of non-zero terms of the form 
$\lambda m_{1} Y_{i}[m_{2}]$ with $\lambda\in \kk\setminus \{0\}$,
$m_{1}\in \Mon$, $m_{2} \in B$.

The monomial $m=m_{1} x_{i} m_{2}$ can be decomposed in a unique way 
as $m=m'_{1} x_{i'} m'_{2}$ with $\delta_{B} (m)=\delta (m'_{1})$ and
$m'_{2}\in B$ and $(i', m'_{1})$ the smallest possible for the lexicographic ordering.

By Lemma \ref{lem:term:Psi} and Lemma \ref{lem:equiv}, there exist
$i_{1},\ldots, i_{k}$, $i'_{1}, \ldots$, $i'_{k'} \in [-n,n]^{*}$
with 
$m = x_{i_{1}}\cdots x_{i_{k}}= x_{i'_{1}}\cdots x_{i'_{k'}}$ 
and $r\in \Sc$ with $\delta (r) < \delta (m_{1})$ such that
\begin{eqnarray*}
{m_{1} Y_{i}[m_{2}]}
&=& m'_{1} Y_{i'}[ m'_{2}] + r + \Psi_{i_{1}, \ldots, i_{k}} -
\Psi_{i'_{1}, \ldots, i'_{k'}}\\
&\equiv&
m'_{1} Y_{i'}[ m'_{2}] + r  \mod K_{1}.
\end{eqnarray*}
Applying this relation inductively on the terms of $r$, any element
$m_{1}Y_{i}[m_{2}]$ can be reduced modulo $K_{1}$ to a sum of terms of the form 
$\lambda m'_{1} Y_{i'}[ m'_{2}]$ with $\delta_{B} (m'_{1} x_{i'}
m'_{2}) =\delta (m'_{1})$
and $(i, m'_{1})$ the smallest possible for the lexicographic
ordering.

Thus we can assume that the terms of the decomposition of $s\in \ker \partial_{1}$ satisfy
this property. Hereafter, we call this decomposition a canonical
decomposition of $s$.
Suppose that the canonical decomposition $s$ is not zero. 
Then for any term $(m_{1} Y_{i}[ m_{2}]$ of this canonical decomposition, we have 
$\partial_{1} (m_{1} Y_{i}[ m_{2}])= m_{1} x_{i} m_{2} + p$
with $\delta_{B} (p) < \delta_{B} (m_{1} x_{i} m_{2})$.

Let us consider a term $\lambda\, m_{1} Y_{i}[ m_{2}]$ such that
$\delta_{B} (m_{1}  x_{i}$ $ m_{2})$ $= \delta (m_{1})$ is maximal. 
As $s\in \ker \partial_{1}$, the monomial $\lambda m_{1} x_{i} m_{2}$ must be cancelled 
by a monomial  $\lambda' m'_{1}  x_{i'} m'_{2}$ from the image 
$\partial_{1} (m'_{1} Y_{i'}[m'_{2}])$ of a distinct term of the
canonical decomposition $s$.

As there is a unique way to decompose a monomial $m\in \Mon$ as
$m=m_{1} x_{i} m_{2}$ with 
$m_{1} Y_{i}[m_{2}] \neq 0$,
$\delta_{B} (m'_{1} x_{i'} m'_{2}) =\delta (m'_{1})$
and $(i, m'_{1})$ the smallest possible for the lexicographic
ordering, this is not possible. We obtain a contradiction, which shows that
the canonical decomposition of $s$ is zero and  that
$s\in \ker \partial_{1}$ can be reduced to $0$ modulo $K_{1}$. In
other words, $K_{1}= \ker \partial_{1}$ which proves the theorem.
\end{proof}

\section{Algorithm}\label{sec:4}

In this section we describe an algorithm based on the above
properties to compute a border basis for a system of Laurent
polynomials. For the sake of simplicity, and as they were not needed
before, we introduce the following definitions:

\begin{defn}
  The ball $Ball(k)$ of radius $k$ is the set of monomials $m$ of
  $\XX$ such that $\delta(m)\leq k$.
\end{defn}

\begin{defn}
  Let $f$ be a Laurent polynomial. A monomial $m$ of the support
  of $f$ is said to be {\em extremal} for $f$ if
  $\delta(m)=max(\delta(m'),\ m'\in\supp(f))$. In other words an
  extremal monomial is a monomial of maximal degree.
\end{defn}

We use the preceeding definitions to extend the notion of choice
functions introduced in \cite{BMPhT05} to the context of Laurent
polynomials. We recall that in the context of usual
polynomials, the choice function generalizes the construction of the leading monomial
for a monomial ordering. It is used to the select a
monomial of a polynomial in the reduction process.

\begin{defn}
  A choice function $\gamma$ refining the degree $\delta$
 is a function such that, given a Laurent polynomial $f$ 
  returns a monomial $\gamma(f)$ of the support of $f$ that is
  extremal.
\end{defn}

\subsection{Description}
We now describe the complete algorithm for computing a border basis for a
Laurent polynomial system. This algorithm follows the same approach as
in \cite{BMPhT12}, with adaptations to be done for
dealing with the fact that the prolongation operation $\cdot^\times$ can lead to degree drops.
It is a ``fixed-point'' method which updates a set of polynomials $F$ and
a monomial set $B$ until they stabilize. The update is done so that if a fixed-point is reached,
then $F$ is a border basis for the monomial set $B$.

The monomial set $B$ is represented as a finite union of differences of cones
$B=\cup_i \left ( ((m_{i})) \right.$ $\setminus ((m_{j_1}))$ $ \cdots$ $\left. \setminus
((m_{j_{k'_i}})) \right)$. By construction, if $1\in B$, then $B$ is
connected to $1$. When a monomial $m$ is removed from $B$,
the corresponding cone $((m))$ is removed from the representation of
$B$, so that $B$ remains connected to $1$.

\begin{algorithm}\caption{Border basis \label{algo:bbt}}  
\KwIn{A set of Laurent polynomials $\Fc=\{f_1,\ldots,f_s\}$ and
  $\gamma$ a choice function refining the degree.}
\KwOut{A border basis for $I=(f_1,\ldots,f_s)$.}
\begin{itemize}
 \item $[k,F_{k},B]:=Initialization(\Fc)$
 \item While $not(is\_empty(F_k)) \mathrm{\ or\ } k<\max_{f\in \Fc}\delta(f);$

 {\bf core loop} 
   \hspace{-10pt} \begin{enumerate}
    \item Compute $C^{1}_{k+1}:=\com_{B}^{1} (F_k)$ and
      $A_{k+1}:= (B^{\times})_{\le k+1}$.
      %Ici c'est specifique au torique
      %on peut faire rentrer un polynome dans le quotient
      %il faut gerer ce cas AVANT de calculer la matrice
    \item If there exists polynomials in $C^{1}_{k+1}$ of degree $<k$,
      then 
     \begin{itemize}
        \item Compute the minimal $l$ such that the ball of radius $l$
          contains polynomials of $C^{1}_{k+1}$;
        \item Using $\gamma$ choose leading monomials  for the
          polynomials of $\{f\in C^{1}_{k+1}, supp(f)\subset Ball(k)\}$;
        \item Compute $C^{1}_{l}\cup \{f\in C^{1}_{k+1}, supp(f)\subset Ball(k)\}$
        \item Set $k=l$;
      \end{itemize}
    \item Construct the matrix $M_{k+1}:=(C^{1}_{k+1}|A_{k+1})$.% of polynomials
    \item Compute $r_{k+1} := \rank M_{k+1}$.
   \item If $\vspan{C^{1}_{k+1}}$ contains polynomials of degree
    $<k+1$, add them to $\Fc$ and start a new loop with
    $k:=\min_{p\in C^{1}_{k+1}}\delta (p)$.
  \item If $\# (A_{k+1}\setminus B_{k+1})\neq r_{k+1}$,
    % (ie. $M_{k+1}$ is not of full rank), 
\begin{itemize}
\item compute $A'_{k+1}\subset A_{k+1}$ such that $\# A'_{k+1}=
  r_{k+1}=\rank (F_{k+1}\mid A'_{k+1})$; for instance looking at the
  monomials indexing the columns of a maximal invertible submatrix of
  $M_{k+1}$.
 \item compute $B'_{k+1}= A_{k+1} \setminus A'_{k+1}$; 
 \item add the monomials $B'_{k+1}$ to $B$; 
    %%ici modif  sur monomoes
\end{itemize}
    \item Define $\pi_{k+1}: \vspan{B^{\times}}_{\le k+1}\rightarrow
\vspan{B}_{\le k+1}$ as the extension of $\pi_{k}$ such that
$C^{1}_{k+1}\subset \ker \pi_{k+1}$ and $F_{k+1}$
as the new polynomials in the ball of radius $k+1$ in the corresponding rewriting family.
     \item Compute $C^{2}_{k+1} :=\pi_{k+1} (\Cc^2_B(F_k))\cup {\pi}_{k+1} (\Fc_{k+1})$.
     \item If 
%the matrix $M_{k+1}$ is of full rank and 
$C^{2}_{k+1} = \{0\}$, then start a new loop with $k:=k+1$.
   \item If $\vspan{C^{2}_{k+1}}$ contains polynomials in the ball of radius
    $<k+1$, add them to $\Fc$ and start a new loop with
    $k:=\min_{p\in C^{2}_{k+1}}\delta (p)$.

     \item %For all $q \in C^{2}_{k+1}\setminus \{0\}$, 

Apply $\gamma$ to $\vspan{C^{2}_{k+1}}$, remove the monomial ideal generated by
      $\gamma (\vspan{C^{2}_{k+1}})$ from $B$ and update $\pi_{k+1}: \vspan{B^{\times}}_{\le k+1}\rightarrow
\vspan{B}_{\le k+1}$ and $F_{k+1}$, so that $C^{2}_{k+1}\subset \ker \pi_{k+1}$.
    \end{enumerate}
  \end{itemize}
 \end{algorithm}

Let us detail how Algorithm \ref{algo:bbt} is running.
The procedure $Initialization(\Fc)$ is used to find
the initial degree $k$ and the initial relations $F_k$ of degree $k$
and the associated monomial set $B$. 

The core of the algorithm is a loop where the set of polynomials $F_{k}$
and the monomial set $B$ are updated. The variable $k$ of each loop is
the degree in which the polynomial operations are performed. 

After computing the prolongation polynomials $\Cc^{1} (F_{k})$ in step 1, the degree
$k$ is adjusted in step 2 to the maximal degree of these polynomials.

The coefficient matrix of these polynomials is computed in step 3, and
used in step 4 and 5 to determine their rank and the minimal degree
of a polynomial in the vector space that they span.
The degree $k$ is then adjusted to this minimal degree in a new loop.

Step 6 checks if there is a rank deficiency, that is, if the rank of the
prolongation polynomials $\Cc^{1} (F_{k})$ corresponds to the number of
new monomials of $\partial B$ in degree $k+1$. If there is a rank
deficiency, the monomial set $B$ is extended by new monomials so that there is no rank deficiency.

Step 7 constructs the projection and the rewriting family in degree
$k+1$.

The steps 8, 9 compute the commutation polynomials 
$\Cc^{2}(F_{k})$, the polynomial of degree $k+1$ of $\Fc$
and their remainders by the rewriting family in degree $k+1$.

The steps 10 and 11 checks if there polynomials of degree $<k+1$ in
the vector space spanned by these remainders and update the degree and
the monomial set $B$ if needed.

As we can see, the degree $k$ can either increase or drop to a lower
value (in steps 2, 5 and 10).

The degree $k$ is adjusted so that throughout the algorithm, the
leading monomial of the polynomials in $F_{k}$ is of degree $\le k$.

{}The new algorithm does not required a large modification of the
border basis algorithm of \cite{BMPhT12}. 

\subsection{Correctness}
We now prove that this algorithm stops and produces a border basis of
the generators of a zero dimensional ideal, that is, a system of Laurent polynomials
$\Fc$ such that $\dim \Ac< \infty$ where $\Ac=S/(\Fc)$. A proof relying on the Noetherianity
of monomial ideals (as for Gr\"obner bases) 
cannot be employed here since all non-zero monomial ideals of $S$ are equal to $(1)$.

We give here a lemma which is useful in the proof of correctness.

\begin{lem}\label{lem-ext}
  Let $k\in\NN$ then $\ker(\pi_k)^{\times}\subset \ker \pi_{k+1}$.
\end{lem}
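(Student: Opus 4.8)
\textbf{Proof plan for Lemma \ref{lem-ext}.}

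The plan is to unwind the definitions and compare the two projections on generators of $\ker(\pi_k)^{\times}$. Recall that $\ker \pi_k = \vspan{F_k}$, where $F_k$ is the rewriting family in degree $k$, so $\ker(\pi_k)^{\times}$ is spanned by elements of the form $x_i f$ for $f \in F_k$, $i \in [-n,n]^{*}$ (together with $F_k$ itself, which already lies in $\ker \pi_{k+1}$ by the extension clause in step 7). Thus it suffices to show $x_i f \in \ker \pi_{k+1}$ for every $f \in F_k$ and every $i$. Now $x_i f$ is by definition a prolongation polynomial, i.e.\ $x_i f \in \com_B^1(F_k) = C^1_{k+1}$, and step 7 of Algorithm \ref{algo:bbt} constructs $\pi_{k+1}$ precisely as the extension of $\pi_k$ with $C^1_{k+1} \subset \ker \pi_{k+1}$. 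Hence $\pi_{k+1}(x_i f) = 0$, and we are done.

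The one point that needs care is the degree bookkeeping, which is the reason the statement is phrased with the prolongation on the left. The prolongation operation $\cdot^{\times}$ can cause degree drops: $x_i f$ may have degree $\le k$ rather than $k+1$, or even strictly smaller. So I would first check that every generator $x_i f$ of $\ker(\pi_k)^{\times}$ indeed lies in the domain $\vspan{B^{\times}}_{\le k+1}$ of $\pi_{k+1}$. Since $f \in \vspan{B^{\times}}$ has $\delta(f) \le k$ (the rewriting family in degree $k$ consists of polynomials of degree $\le k$), we get $\delta(x_i f) \le k+1$, and each monomial of $x_i f$ lies in $B^{[2]}_{\le k+1}$; after the update of $B$ in step 6 this is still inside $\vspan{B^{\times}}_{\le k+1}$ once $B$ has absorbed the relevant border monomials, so $\pi_{k+1}$ is defined on $x_i f$. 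The extension property from step 7 then applies verbatim.

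The main (and essentially only) obstacle is therefore conceptual rather than computational: one must be sure that the ``$F_{k+1}$'' and ``$\pi_{k+1}$'' produced by step 7 genuinely satisfy $C^1_{k+1} \subset \ker \pi_{k+1}$ and $\pi_{k+1}|_{\vspan{B}_{\le k+1}} = \pi_k|_{\vspan{B}_{\le k}}$ after the possible enlargement of $B$ in step 6 — i.e.\ that enlarging $B$ does not retroactively move elements already killed by $\pi_k$. This is guaranteed because $B$ only grows (monomials are added, making $\vspan{B}$ larger and $\ker$ correspondingly adjusted so that the old rewriting relations are preserved), and the new projection agrees with the old one on the old quotient part. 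Once this compatibility is recorded, the inclusion $\ker(\pi_k)^{\times} \subset \ker \pi_{k+1}$ is immediate from the definition of the prolongation polynomials and the construction in step 7.
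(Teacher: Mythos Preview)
Your approach is essentially the same as the paper's: the paper's own proof is a single sentence pointing to step~7 of Algorithm~\ref{algo:bbt}, and your argument unwinds exactly that---$\ker(\pi_k)^{\times}$ is spanned by the prolongation polynomials $x_i f$, these are $C^1_{k+1}$, and step~7 forces $C^1_{k+1}\subset\ker\pi_{k+1}$. One small caution: your claim that ``$B$ only grows'' is not true globally (step~11 can remove monomials from $B$), but it is true within the passage from $\pi_k$ to $\pi_{k+1}$ in a single core-loop iteration, which is all the lemma requires; you may want to phrase that more carefully.
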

\begin{proof}
  This comes from the fact that we construct $\pi_{k+1}$ as a
  prolongation of $\pi_k$ in step 7.
\end{proof}

We can now give the proof of termination and correctness of the
algorithm. This proof relies heavily on the fact that $\gamma$ refines
the degree. It is new and simpler than the one given in \cite{BMPhT05}.

\begin{thm}
If $\Fc$ is zero dimensionnal, i.e. $\dim S/(\Fc)< \infty$, then Algorithm
\ref{algo:bbt} stops and returns a border basis $\Fc$ of $\Fc$ for a
monomial set $B$. 
\end{thm}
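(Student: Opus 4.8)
The plan is to show termination first, and then correctness. For termination, the key invariant to track is the monomial set $B$, or rather its complement. Each time we pass through the core loop, either $B$ stays the same (and some numerical datum strictly decreases, forcing the loop to exit), or $B$ strictly shrinks: in step 6 we \emph{add} monomials to $B$, but in step 12 we \emph{remove} a monomial ideal from $B$. The crucial point is that the degree $k$ at which we operate is bounded: because $\Fc$ is zero-dimensional, $\dim S/(\Fc) < \infty$, and I would argue that once $B$ is large enough to contain a basis of $S/(\Fc)$, no monomial ever needs to be added again, so $B$ can only shrink thereafter, and a set of monomials inside a ball of bounded radius cannot shrink forever. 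The subtle issue, which is why the prolongation $\cdot^\times$ ``degree drops'' matter, is that $k$ is not monotone — steps 2, 5, 10 can pull $k$ down. I would handle this by showing that the pair $(\dim \vspan{B}_{\le N}, \text{something})$ for $N$ large enough is eventually monotone, or more directly by a potential-function argument: associate to each state the finite quantity $\dim_\kk \bigl( S_{\le N}/(F_k)_{\le N} \bigr)$ for $N$ a fixed bound, and show it never increases and strictly decreases on the infinitely-often-taken branches.

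The bound on the working degree $k$ is the heart of termination. Since $\dim S/(\Fc)<\infty$, the quotient is spanned by finitely many monomials, all of degree $\le N_0$ for some $N_0$. I would show that once $k > N_0$ and $B$ contains a spanning set of $S/(\Fc)$, the rank condition in step 6 is automatically satisfied (no new monomials added) and the commutation remainders in step 9 eventually vanish, so that the while-condition $not(is\_empty(F_k))$ fails. Here Lemma \ref{lem-ext} is essential: $\ker(\pi_k)^\times \subset \ker\pi_{k+1}$ guarantees that the ideal information accumulated at lower degrees is never lost when we pass to degree $k+1$, so $(F_k)$ is a non-decreasing sequence of ideals. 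Combined with $\gamma$ refining the degree — which ensures the leading monomial of every polynomial in $F_k$ has degree $\le k$, so reductions genuinely simplify — this forces stabilization.

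For correctness, once the algorithm stops, I need to check that the final $F$ is a border basis for the final $B$ in every degree, i.e. one of the equivalent conditions of Theorem \ref{THM:BB:DEG:D} holds for all $d$. The stopping condition gives that $C^2_{k+1}$ reduces to $0$ at the terminal degree, which is exactly condition (3) of Theorem \ref{THM:BB:DEG:D} in that degree, namely $\pi(r)=0$ for all $r\in \com_B(F_{\le d-1})$; Remark \ref{rem:1} then propagates it down to all intermediate degrees, and Lemma \ref{lem-ext} together with the fact that no polynomial of $\Fc$ of higher degree remains unprocessed propagates it up. One also checks $\Fc \subset (F)$ and $(F) \subset (\Fc)$ — the first because every input polynomial is reduced against the rewriting family (steps 5, 9) and its remainder added, the second because every element of $F$ lies in the ideal generated by the commutation and prolongation polynomials of $\Fc$, hence in $(\Fc)$. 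Then Remark \ref{rem:2} gives $S_{\le d} = \Span{B}_{\le d} \oplus (F)_{\le d}$ for all $d$, which is precisely the assertion that $F$ is a border basis for $B$, and zero-dimensionality of $(\Fc)=(F)$ matches $\# B < \infty$.

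The main obstacle I anticipate is the termination argument in the presence of non-monotone $k$ and the degree drops caused by $\cdot^\times$: one cannot simply induct on $k$, and one cannot invoke Noetherianity of monomial ideals (all nonzero ones are $(1)$, as the authors note). The right move is to fix in advance a degree bound $N$ coming from zero-dimensionality and run all the counting arguments inside the finite-dimensional space $S_{\le N}$, showing that the finite quantity $\dim_\kk S_{\le N} - \dim_\kk \Span{B}_{\le N}$ (the codimension of $B$ up to degree $N$) changes monotonically and eventually reaches its stable value, after which the while-loop must terminate.
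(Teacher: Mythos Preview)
Your termination argument has a genuine gap. Neither of your proposed potential functions is monotone. The quantity $\dim S_{\le N} - \dim \langle B\rangle_{\le N}$ decreases when $B$ grows (step~6) and increases when $B$ shrinks (step~11), so it oscillates. The quantity $\dim S_{\le N}/(F_k)_{\le N}$ is not obviously preserved across degree drops: Lemma~\ref{lem-ext} controls what happens when $k$ is incremented, but says nothing about what survives when $k$ drops and the rewriting family is rebuilt at a lower degree. Your assertion that ``once $B$ contains a spanning set of $S/(\Fc)$, no monomial ever needs to be added again'' is likewise unjustified --- monomials are added in step~6 by a local rank test, not by a spanning criterion, and step~11 can subsequently shrink $B$ below any previously attained set, so there is no reason $B$ should ever stabilise at a spanning set.

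Ironically, the approach you explicitly set aside (``one cannot simply induct on $k$'') is precisely what the paper does. The paper proves by induction on $d$ that the loop runs with $k=d$ only finitely many times. The structural facts driving this are: (i) monomials added to $B$ in step~6 are always of degree exactly $k+1$, so after the last iteration with $k=d$ no monomial of degree $d+1$ is ever added again; (ii) $k$ increases only by~$1$ at a time, so $k=d+1$ can be reached from below only through $k=d$; (iii) reaching $k=d+1$ from above is a degree drop, which forces removal of a monomial of degree $d+1$ from $B$, and there are only finitely many such monomials. For the global bound on $k$, the paper uses that $B$ is connected to~$1$, hence $|B|\ge k$ whenever the loop runs at parameter $k$; once $k>D+1$ with $D=\dim S/(\Fc)$ there is a nonzero $p=\sum_i p_if_i\in(\Fc)$ supported in $B$, and iterating Lemma~\ref{lem-ext} up to $k'=\max_i\delta(p_if_i)$ shows $p\in\ker\pi_{k'}$, which forces a drop to some degree $\le D+1$ between the current step and the step $k=k'$. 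Combining ``each degree visited finitely often'' with ``only finitely many excursions above $D+1$'' gives termination. Your correctness sketch via condition~(3) of Theorem~\ref{THM:BB:DEG:D} at the terminal degree is fine and matches the paper.
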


\begin{proof}
  Let us first notice that if at anytime in the algorithm, the
  variable $k$ is equal to $0$ then
  the algorithm stops and returns the polynomial $1$, which allows us
  to define the null projection. 

  We now remark that the construction of $B$ is not monotonic: $B$ can
  increase or decrease. However, the monomial set $B$ is extended with
  new monomials only at step $6$. At step $6$, the
  algorithm is operating on polynomials of degree $k$, performing the
  prolongation operation $\cdot^x$ on them, 
  checking that no non-zero remainder of degree less than
  $k+1$ have appeared and applying linear
  algebra steps. If some monomials are added to $B$, these monomials
  are the leading monomials of polynomials of degree $k+1$ and hence
  they are of degree $k+1$. This means that one cannot add to $B$ a
  monomial of degree $1$, otherwise we would have gone through degree
  $0$. Hence there are finitely many {\tt core loop} turns where $k=1$.

  Let us now prove by induction that for all $d\in\NN$ there is
  finitely many {\tt core loop} turns where $k=d$. This is true for
  $d=1$. Let us suppose this is true for some $d$ and prove it for
  $d+1$. Consider one {\tt  core loop} turn {\em after} the last loop
  turn where $k=d$ (this last turn exists due to our hypothesis). Then as
  mentioned in the above paragraph, not going any further with $k=d$,
  one will never add to $B$ any monomial of degree $d+1$. We now remark
  that at the {\tt core loop} considered, there are two possibilities:
  \begin{itemize}
    \item $k$ is strictly below $d$ and due to our induction
      hypothesis one will never reach degree $d+1$ since the degree
      $d+1$ can only be reached after a step at degree $k=d$,
    \item $k$ is greater than $d$. In that case, $k$
      can become equal to $d+1$ only in case of a degree drop during
      subsequent {\tt core loop} turns, and that dropping to degree
      $d+1$ implies removing from $B$ at least one monomial of degree
      $d+1$. As there is only finitely many monomials of degree $d+1$,
      $k$ cannot take infinitely many times the value $d+1$.
  \end{itemize}
  In these two cases, $k$ can only be finitely many times equal to $d+1$
  which ends our induction.

  Let us show now that $k$ is bounded during all the computation. 

  First remark that the $B$ constructed by this algorithm is connected
  to $1$, hence if in the {\tt core loop} $k=d$, there is at least $d$
  monomials in $B$ at that loop turn. Let $D=\dim S/ (\Fc) < \infty$.
  If at any time $k$ becomes greater than $D+1$ then there exists a
  polynomial in the ideal $I$ whose support is included in the
  corresponding monomial set $B$. Let $p=p_1f_1+p_2f_2+\cdots+p_sf_s$
  be this polynomial. Applying Lemma \ref{lem-ext} inductively 
  starting from $max(deg(f_i),\ i\in [1,s])$ up to
  $k'=max(deg(p_if_i),\ i\in [1,s])$, one has that $p\in
  \ker(\pi_{k'})$. Hence $k'\geq k$ since $\pi_k$ is the identity on
  $B$. Since $p\in \ker(\pi_{k'})$, there is at least one monomial of
  the support of $p$ outside $B$ when the {\tt core loop} is ran with
  $k=k'$. Let $m$ be this monomial, one have $deg(m)\leq D+1$ since
  $B$ is connected to $1$. Therefore between the initial step
  when $k>D+1$ and the step when $k=k'$, $k$ must drop to a degree
  less than $D+1$. Since there is finitely many drops possible below
  degree $D+1$ there is finitely many moment when $k>D+1$.

  We deduce that $k$ remains bounded and for each degree $d\in \NN$,
  there is finitely many steps when $k=d$. This implies that the
  algorithm eventually stops.

  The termination of the algorithm follows from the inductive application
  of Lemma \ref{lem-ext}.  

  As the algorithm stops, say with $k=d$, all the monomials of $\partial B_{d}$ are
  the leading monomial of an element of the rewriting family $F_{d}$. 
  Since the monomial set $B_{d}$ is not updated during the
  last loop of the algorithm, the commutation polynomials $\Cc_{B}^{1}
  (F_{d})$, $\Cc_{B}^{2} (F_{d})$ project to $0$ by $\pi_{d+1}$.  By
  Theorem \ref{THM:BB:DEG:D}, we deduce that $F_{d}$ is a border basis
  for $B_{d}$.
\end{proof}

\subsection{Example}%\footnote{a C++ implementation is ongoing}
\label{sec:5}

Let us examine the behavior of the previous algorithm on a generic
quadratic system of two equations in two variables, that is a system of
generic Laurent polynomials which support is the set of integer points
$A$ of the convex hull of $(-2,0), (0,-2), (0,2), (2,0) \in \ZZ^{2}$. 

Suppose that we use a Macaulay-like choice function, i.e., a function that chooses one
monomial of highest partial degree.

The {\em Initialization} procedure defines an initial monomial set $B$ to be all
the monomials except those of higher partial degree than $2$ which
graphically looks like:

\begin{center}
  \includegraphics{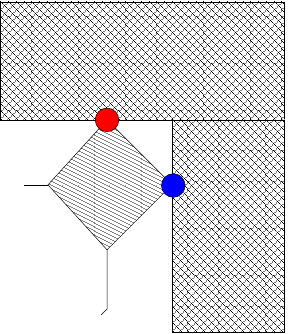}
\end{center}
The prolongation operation  $\,^\times$ on this initial configuration produces six new
polynomials, obtained by multiplying the equations by all the
variables that either follow the border of $B$ or get into it. We have
drawn there leading monomials and the Newton polytope of one of them.

\begin{center}
  \includegraphics{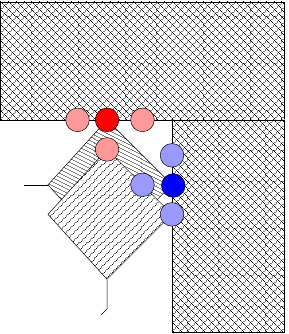}
\end{center}

According to our test in the core loop, the new polynomial drawn is
such that its leading monomial is not among the maximal degree
monomials of the support. The other polynomial is also in this
situation. So, according to the test done in step $2$, two new leading
monomials are chosen for these two polynomials and $B$ is update
accordingly.

\begin{center}
  \includegraphics{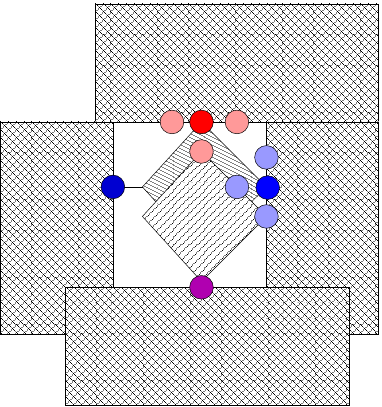}
\end{center}

The rest of the computation consists in following the border of this
monomial set $B$ in the same way as it is done in the generic polynomial
setting \cite{BMPhT00}.

The algorithm ends with a monomial set $B=\{ x_{1}^{\alpha_{1}}
x_{2}^{\alpha_{2}}\mid -2 \leq \alpha_{1} < 2, -2 \leq \alpha_{2} <
2\}$ of size $16$, which is the normalized volume of $A$ (i.e. $2 \times
Vol(A)$). The corresponding border basis $F$ is a set of 16
polynomials, one for each monomial of $\partial B$. 

What occurs on this example of a generic $2$ variate system can be generalized to $n$ variate
systems. The following table shows the sizes $N$ of the different
linear systems that have to be solved and the associated integer $k$
in the main loop of the algorithm, for solving a system of $n$ generic $n$ variate
Laurent polynomials of $\delta$-degree $2$ in each variable. 
For comparison, we give the size $M$ of the matrix to be inverted in the Schur complement
computation involved in the sparse resultant approach (see \cite{CAEm93}).  {\small
$$
\begin{array}{|c|c|c|}
  \hline
  n& N & M\\
  \hline
  2 & 2, 6, 6, 2 & 39\\
  3 & 3, 15, 30, 30, 15, 3 & 475\\
  4 & 4, 28, 84, 140, 140, 84, 28, 4  & 5165\\
  5 & 45, 180, 420, 630, 630, 420, 180, 45, 5 &54306\\
  6 & 6, 66, 330, 990, 1980, 2772, 2772, 1980, 990, 330, 66, 6 &566461\\
 % 7 & 7, 91\, 546\, 2002\, 5005\, 9009\, 12012\, 12012\, 9009\, 5005\,
 % 2002\, 546\, 91\, 7& 823680\\
  \hline
\end{array}
$$
} 
In this table, we clearly see the improvement of the toric border basis
algorithm, compared to the sparse resultant method. Instead of solving  
one big linear system, the toric border basis computation involves the
solution of several much smaller linear systems. This improves both the
complexity and the numerical behaviour of the method. 
% We remark that the computation on generic systems is
% the same for sparse and Macaulay resultant methods, and recall that for
% usual Macaulay resultant the numerical results obtained on quadratic
% $6$ variables systems were not satisfactionary but were quite good for
% the Macaulay revisited method \cite{BMPhT00}.

\section{Conclusion}

Normal form methods provides an effective way to compute the quotient
structure of a polynomial ring by an ideal, and thus to solve
polynomial equations. 
The Gr\"obner basis approach consists in completing a set of rewriting
rules on the monomials which is driven by a monomial ordering. 
Its extension to Laurent polynomials is difficult or expensive.

The border basis approach consists in imposing commutation relations
to operators of multiplication, extending the rewriting techniques to
a wider class of problems.  We show in this paper, that the approach
can naturally be extended to Laurent polynomials by imposing inversion
and commutation relations to the multiplication operators. The border
basis approach provides also a description of the first module of
syzygies.  This leads to a normal form algorithm for Laurent
polynomials, which performs linear algebra operations on monomials
with exponents in $\ZZ^{n}$.  

If the ideal $(\Fc)$ is a zero-dimensional ideal, we have 
shown the termination of the new algorithm. For ideal of positive dimension, we plan to
investigate techniques based on regularity detection as in \cite{BMPhT12}.

In this paper, we have considered Laurent polynomial rings, in which
all the variables are invertible. We can check that the approach
applies also to rings where only some of the variables are invertible,
by considering the inversion relations for these variables and the
commutation relations for all the pairs of variables.

This approach can be used to compute the solutions of a polynomial
system outside a variety: $g(x_{1}, \ldots, x_{n})\neq 0$. A new
invertible variable $x_{n+1}$ and the equation 
$x_{n+1} - g (x_{1}, \ldots, x_{n})=0$ can be introduced to compute the solutions of a
system outside the hypersurface defined by $g$.
We plan to investigate further applications of this toric border basis
approach such as residual intersections and to compare it with saturation techniques for classical
polynomial computation.
We also plan to provide an implementation of
this new algorithm in the package \textsc{borderbasix}\footnote{\small
 \texttt{mathemagix.org/www/borderbasix/doc/html/index.en.html}}.

%\bibliographystyle{plain}
%\bibliography{borderbasis}

\begin{thebibliography}{10}

\bibitem{CAEm93}
J.~Canny and I.~Emiris.
\newblock An efficient algorithm for the sparse mixed resultant.
\newblock In G.~Cohen, T.~Mora, and O.~Moreno, editors, {\em Proc.\ Intern.\
  Symp.\ Applied Algebra, Algebraic Algor.\ and Error-Corr.\ Codes (Puerto
  Rico)}, volume 673 of {\em Lect. Notes in Comp. Science}, pages 89--104.
  Springer-Verlag, 1993.

\bibitem{CaEm00}
J.F. Canny and I.Z. Emiris.
\newblock A subdivision-based algorithm for the sparse resultant.
\newblock {\em J. ACM}, 47(3):417--451, May 2000.

\bibitem{CLO92}
D.~Cox, J.~Little, and D.~O'Shea.
\newblock {\em Ideals, Varieties, and Algorithms}.
\newblock Undergraduate Texts in Mathematics. Springer-Verlag, New York, 2nd
  edition, 1997.

\bibitem{CLO97}
D.~Cox, J.~Little, and D.~O'Shea.
\newblock {\em Using Algebraic Geometry}.
\newblock Springer-Verlag, New York, 1997.

\bibitem{Dandrea02}
C.~D'Andrea.
\newblock Macaulay style formulas for sparse resultants.
\newblock {\em Trans. Amer. Math. Soc.}, 354:2595--2629, 2002.

\bibitem{Eis94}
D.~Eisenbud.
\newblock {\em {C}ommutative {A}lgebra with a view toward {A}lgebraic
  {G}eometry}, volume 150 of {\em Graduate Texts in Math.}
\newblock Berlin, Springer-Verlag, 1994.

\bibitem{EM07}
M.~Elkadi and B.~Mourrain.
\newblock {\em Introduction {\`a} la r\'esolution des syst\`emes polynomiaux},
  volume~59 of {\em Math\'ematiques et Applications}.
\newblock Springer, 2007.

\bibitem{Emiris:1994:MBP:190347.190374}
I.Z. Emiris and A.~Rege.
\newblock Monomial bases and polynomial system solving.
\newblock In {\em Proceedings of the International Symposium on Symbolic and
  Algebraic Computation}, ISSAC '94, pages 114--122, New York, NY, USA, 1994.
  ACM.

\bibitem{GKZ94}
I.M. Gelfand, M.M. Kapranov, and A.V. Zelevinsky.
\newblock {\em Discriminants, {R}esultants and {M}ultidimensional
  {D}eterminants}.
\newblock Boston, Birkh{\"{a}}user, 1994.

\bibitem{Ka11}
S.~Kaspar.
\newblock Computing border bases without using a term ordering.
\newblock {\em Beitr\"age zur Algebra und Geometrie / Contributions to Algebra
  and Geometry}, pages 1--13, 2011.

\bibitem{KK05}
A.~Kehrein and M.~Kreuzer.
\newblock Characterizations of border bases.
\newblock {\em J. Pure Appl. Algebra}, 196(2-3):251--270, 2005.

\bibitem{KK06}
A.~Kehrein and M.~Kreuzer.
\newblock Computing border bases.
\newblock {\em J. Pure Appl. Algebra}, 205(2):279--295, 2006.

\bibitem{KR05}
M.~Kreuzer and L.~Robbiano.
\newblock {\em Computational Commutative Algebra 2}.
\newblock Springer, Heidelberg, 2005.

\bibitem{Mac02}
F.S. Macaulay.
\newblock Some formulae in elimination.
\newblock {\em Proc.\ London Math.\ Soc.}, 1(33):3--27, 1902.

\bibitem{BMnf99}
B.~Mourrain.
\newblock A new criterion for normal form algorithms.
\newblock In M.~Fossorier, H.~Imai, Shu Lin, and A.~Poli, editors, {\em Proc.
  AAECC}, volume 1719 of {\em LNCS}, pages 430--443. Springer, Berlin, 1999.

\bibitem{BM07}
B.~Mourrain.
\newblock {\em Symbolic-Numeric Computation}, chapter Pythagore's Dilemma,
  Symbolic-Numeric Computation, and the Border Basis Method, pages 223--243.
\newblock Trends in Mathematics. Birkh\"auser, 2007.

\bibitem{BMPhT00}
B.~Mourrain and Ph. Tr\'ebuchet.
\newblock Solving projective complete intersection faster.
\newblock In C.~Traverso, editor, {\em Proc.\ Intern.\ Symp.\ on Symbolic and
  Algebraic Computation}, pages 231--238. New-York, ACM Press., 2000.

\bibitem{BMPhT05}
B.~Mourrain and Ph. Tr\'ebuchet.
\newblock {Generalised normal forms and polynomial system solving}.
\newblock In M.~Kauers, editor, {\em {International Conference on Symbolic and
  Algebraic Computation}}, pages 253--260, Beijing, China, 2005. ACM New York,
  NY, USA.

\bibitem{MOURRAIN:2008:INRIA-00343103:1}
B.~Mourrain and Ph. Tr{\'e}buchet.
\newblock {Stable normal forms for polynomial system solving}.
\newblock {\em Theoretical Computer Science}, 409(2):229--240, 2008.

\bibitem{BMPhT12}
B.~Mourrain and Ph. Tr{\'e}buchet.
\newblock {Border basis representation of a general quotient algebra}.
\newblock In {\em {International Conference on Symbolic and Algebraic
  Computation (ISSAC)}}, pages 265--272, Grenoble, France, July 2012. ACM
  Press.

\bibitem{St:Elim}
B.\ Sturmfels.
\newblock Sparse elimination theory.
\newblock In D.~Eisenbud and L.~Robbiano, editors, {\em Proc. Computat.
  Algebraic Geom. and Commut. Algebra 1991}, pages 264--298, Cortona, Italy,
  1993. Cambridge Univ. Press.

\end{thebibliography}

\end{document}